\theoremstyle{remark}{
\newtheorem{Def}{{\rm Definition}}
\newtheorem{Ex}{{\rm Example}}
\newtheorem{Rem}{{\rm Remark}}

}
\newtheorem{Cor}{Corollary}
\newtheorem{Prop}{Proposition}
\newtheorem{Thm}{Theorem}
\begin{document}
\title[Increasing two connected components of singular sets of fold maps]{Surgery operations to fold maps to increase connected components of singular sets by two}
\author{Naoki Kitazawa}
\keywords{Singularities of differentiable maps; generic maps. Differential topology. Reeb spaces. {\it \textup{2020} Mathematics Subject Classification}: Primary~57R45. Secondary~57R19.}
\address{Institute of Mathematics for Industry, Kyushu University, 744 Motooka, Nishi-ku Fukuoka 819-0395, Japan\\
 TEL (Office): +81-92-802-4402 \\
 FAX (Office): +81-92-802-4405}
\email{n-kitazawa@imi.kyushu-u.ac.jp}
\urladdr{https://naokikitazawa.github.io/NaokiKitazawa.html}
\maketitle
\begin{abstract}
In geometry, understanding the topologies and the differentiable structures of manifolds in constructive ways is fundamental and important. It is in general difficult, especially for higher dimensional manifolds.

The author is interested in this and trying to understand manifolds via construction of explicit {\it fold} maps: differentiable maps locally represented as product maps of Morse functions and identity maps on open balls. Fold maps have been fundamental and useful in investigating the manifolds by observing (the sets of) singular points and values and preimages as Thom and Whitney's pioneering studies and recent studies of Kobayashi, Saeki, Sakuma, and so on, show. Here, construction of explicit fold maps on explicit manifolds is difficult.
 
The author constructed several explicit families of fold maps and investigated the manifolds admitting the maps. Main fundamental methods are surgery operations ({\it bubbling operations}), the author recently introduced motivated by Kobayashi and Saeki's studies such as operations to deform generic differentiable maps whose codimensions are negative into the plane preserving the differentiable structure of the manifold in 1996 and so on. We remove a neighborhood of a (an immersed) submanifold consisting of regular values in the target space, attach a new map and obtain a new fold map such that the number of connected components of the set consisting of singular points increases. In this paper, we investigate cases where the numbers increase by two and obtain cases of a new type. 
          
%  For example, we investigate algebraic topological restrictions on {\it Reeb spaces} of the fold maps. %They are essential tools in studying manifolds by using generic maps, and the source manfolds. We also %show flexibility of homology groups of the Reeb spaces and the source manifolds, for example.    

% Abstract text, usually no more than 200 words.
% Avoid bibliographic references (\cite) and complicated mathematics.
% Please do not use custom macros here, as this abstract has to 
% be able to stand alone.  You may use standard tex/latex/AMS macros.
\end{abstract}

% Leave these items like this, and the journal will fill them in.
% \received{Month Day, Year}   % receive date (for example: October 11, 1999)
% \revised{Month Day, Year}    % date of revision; omit, if no revision;
%                             % if multiple revisions, separate by commas
% \published{Month Day, Year}  % publish date\submitted{Bill Murray}      % Name of Journal's Editor, 
% who handled Article 
% \volumeyear{2014} % Volume Year
% \volumenumber{16} % Volume Number 
% \issuenumber{2}   % Issue Number
% \startpage{1}     % PageNumber of first page
% \articlenumber{1} % Sequence number of article within issue
% If copyright is retained by author, comment this out:
% \owner{International Press}

\maketitle
\section{Introduction and fundamental notation and terminologies.}
\label{sec:1}
In geometry, understanding the topologies and the differentiable structures of manifolds in geometric and constructive ways is fundamental and important. It is in general difficult, especially for higher dimensional manifolds, although they were considerably understood in the latter half of the last century via sophisticated algebraic topological theory and abstract differential topological theory such as homotopy theory, fundamental theory of Morse functions, surgery theory, and so on. See also \cite{milnor}, \cite{milnor2} and \cite{wall} for example to know related tools more precisely.

This paper concerns this understanding via {\it fold} maps, regarded as higher dimensional versions of Morse functions.

We first review fold maps. Before this, we explain several fundamental terminologies on differentiable manifolds and maps.

Throughout this paper, for a differentiable map, a {\it singular point} is a point at which the rank of the differential of the map drops. a {\it singular value} is a point in the target space realized as a value at a singular point, and a {\it regular value} is a point in the target space which is not a singular value: they are defined as this in most of introductory books on differentiable manifolds and maps.

Moreover, the {\it singular set} of the map is the set of all singular points, the {\it singular value set} is the image of the singular set, and the {\it regular value set} is the complement of the singular value set.

Throughout the present paper, manifolds are differentiable and smooth (of class $C^{\infty}$), maps are differentiable and smooth (of class $C^{\infty}$) unless otherwise stated. A diffeomorphism is always assumed to be smooth and the {\it diffeomorphism group} of a manifold means the group consisting of all diffeomorphisms on the manifold.
 
For a smooth manifold $X$, we denote the tangent bundle by $TX$ and the tangent vector at $p \in X$ by $T_p X \subset TX$.

\subsection{Fold maps.}
{\it Fold} maps have been fundamental and useful in investigating the manifolds by observing the singular sets and the singular value sets and preimages as Thom and Whitney's pioneering studies (\cite{thom} and \cite{whitney}) and recent studies of Kobayashi, Saeki, Sakuma, and so on, which we will introduce later, show.

\begin{Def}
\label{def:1}
Let $m>n \geq 1$ be integers. A smooth map from an $m$-dimensional smooth manifold with no boundary into an
$n$-dimensional smooth manifold with no boundary is said to be a {\it fold} map if at each singular point $p$, the map is represented as
$$(x_1, \cdots, x_m) \mapsto (x_1,\cdots,x_{n-1},\sum_{k=n}^{m-i}{x_k}^2-\sum_{k=m-i+1}^{m}{x_k}^2)$$
for some coordinates and an integer $0 \leq i(p) \leq \frac{m-n+1}{2}$.
\end{Def}
\begin{itemize}
\item For any singular point $p$, the $i(p)$ in Definition \ref{def:1} is unique : $i(p)$ is the {\it index} of $p$.
\item The set consisting of all singular points of a fixed index of the map in Definition \ref{def:1} is a smooth and closed submanifold of the domain with no boundary of dimension $n-1$.
\item The restriction to the singular set of the original map in Definition \ref{def:1} is a smooth immersion.
\end{itemize}

\subsection{(Normal) crossings of a family of smooth immersions}
We define a {\it crossing} and a {\it normal} crossing of a family of smooth immersions.

Let $a>0$ be an integer and $\{c_j:X_j \rightarrow Y\}_{j=1}^{a}$ be a family of $a$ smooth immersions
from $m_j$-dimensional smooth manifolds $X_j$ without boundaries into an $n$-dimensional smooth manifold $Y$ with no boundary. A {\it crossing} of the family of the smooth immersions is a point $y \in Y$ such that
${\bigcup}_{j=1}^a {c_j}^{-1}(y)$ has at least two points. A crossing is said to be {\it normal} if the following two properties hold.
\begin{enumerate}
\item The disjoint union ${\bigcup}_{j=1}^a {c_j}^{-1}(y)$ of these preimages is a finite set consisting of exactly $b_y>1$ points.
\item Denote all the elements of ${\bigcup}_{j=1}^a {c_j}^{-1}(y)$ by $\{p_{y,j}\}_{j=1}^{b_y}$. Let $y(j)$ be the number satisfying $p_{y,j} \in X_{y(j)}$, which we can determine uniquely. We denote the dimension of the intersection
${\bigcap}_{j=1}^{b_y} {d {c_{y(j)}}}_{p_{y,j}}(T_{p_{y,j}}X_{y(j)})$ of the images of the differentials at all points $p_{y,j}$ by $y(c)$. In this situation, $y(c)+{\Sigma}_{j=1}^{b_y} (n-m_{y(j)})=n$.
\end{enumerate}

We can also consider this notion for a single immersion (the case $a=1$) similarly.
%For a point in a subset of $Y$, we also call it a {\it crossing} and a {\it normal} crossing if we can find a suitable family $\{c_j:X_j \rightarrow Y\}_{j=1}^{a}$ of $a$ smooth immersions $c_j$ from $m_j$-dimensional smooth manifolds $X_j$ without boundaries into $Y$ so that the union of the images are the given subset and that we can argue similarly.

\begin{Def}
\label{def:1.2}
A {\it stable} fold map is a fold map whose restriction to the singular set is a smooth immersion such that the crossings of the restriction to the singular set of the original fold map are always normal.
\end{Def}

\begin{Rem}
Stable fold maps are actually defined as a fold map which is {\it stable}. However, we can also define as
Definition \ref{def:1.2}. For more precise and systematic explanations on {\it stable} (fold) maps, see
\cite{golubitskyguillemin} for example.
\end{Rem}

In the present paper, we consider crossings which are normal and preimages of which consist of at most two points.
\subsection{Reeb spaces}
Reeb spaces are also fundamental and important tools in investigating the topologies of the domains of smooth maps whose codimensions are negative. Let $X$ and $Y$ be topological spaces. For $p_1, p_2 \in X$ and for a continuous
map $c:X \rightarrow Y$, we define a relation ${\sim}_c$ on $X$ in the following way: $p_1 {\sim}_c p_2$ if and only if $p_1$ and $p_2$ are in a same connected component of $c^{-1}(p)$ for some $p \in Y$. Thus ${\sim}_{c}$ is an equivalence relation on $X$. We denote the quotient space $X/{\sim}_c$ by $W_c$.
\begin{Def}
\label{def:2}
We call $W_c$ the {\it Reeb space} of $c$.
\end{Def}
We denote the induced quotient map from $X$ into $W_c$ by $q_c$. We can define $\bar{c}: W_c \rightarrow Y$ in a unique way so that the relation $c=\bar{c} \circ q_c$ holds.

\begin{Prop}[\cite{shiota}]
\label{prop:1}
For {\rm (}stable{\rm )} fold maps, the Reeb spaces are polyhedra and the dimensions are equal to the dimensions of the target manifolds.
\end{Prop}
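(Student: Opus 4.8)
The plan is to read off the local structure of $W_f$ from the fold normal forms, check that near every point it is an $n$-dimensional polyhedron, and then glue these local pictures into a global triangulation using that $f$ is proper; the last step, which is the genuinely delicate one, I would settle by invoking Shiota's triangulation theorem for subanalytic (hence in particular stable fold) maps.

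First I would dispose of the dimension count, which is formal once $W_f$ is known to be a polyhedron. The map $\bar{f}\colon W_f \to N$ is finite-to-one, since $\bar{f}^{-1}(y)$ is the set of connected components of the compact space $f^{-1}(y)$ (the domain being closed) and there are finitely many of them; hence $\dim W_f \le \dim N = n$. For the reverse inequality, choose a regular value $y$ of $f$ (Sard); as $f(S(f))$ is closed, a whole neighbourhood $V \cong \mathbb{R}^n$ of $y$ consists of regular values, over which $f$ is a proper submersion, hence a smooth fibre bundle, so for $p \in f^{-1}(y)$ the quotient map $q_f$ identifies a saturated neighbourhood of $p$ with $V \times \{*\}$ and thus a neighbourhood of $q_f(p)$ in $W_f$ with $V$. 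So $W_f$ contains an open $n$-ball and $\dim W_f = n$.

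Next I would produce the local models. Given $w \in W_f$, set $F := q_f^{-1}(w)$, a connected component of a fibre; $F$ is compact and, because $f|_{S(f)}$ is an immersion, contains only finitely many singular points. Near a regular point $f$ is a submersion, so $W_f$ is locally $\mathbb{R}^n$. Near a singular point of index $i$ the normal form is $\mathrm{id}_{\mathbb{R}^{n-1}} \times g$ with $g$ a nondegenerate quadratic form on $\mathbb{R}^{m-n+1}$, so locally $W_f \cong \mathbb{R}^{n-1} \times W_g$; a direct inspection of the level sets $g^{-1}(t)$ shows that $W_g$ is $[0,\infty)$ when $i=0$, is $\mathbb{R}$ when both definite blocks of $g$ have dimension at least $2$, and is a cone on a finite set of points in the remaining low-dimensional cases --- in every case a compact $1$-dimensional polyhedron, so $W_f$ is there locally an $n$-dimensional polyhedron. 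The only other possibility is that $F$ meets several singular points, which forces $w$ to lie over a crossing of $f|_{S(f)}$; that crossing is normal by stability, and the normality identity $y(c) + b_y = n$ (valid since each singular set has dimension $n-1$) bounds the local complexity, so one again obtains a neighbourhood of $w$ of the form $\mathbb{R}^k \times (\text{cone on a compact polyhedron})$ of dimension $n$, by suitably combining the one-singular-point models. Hence every point of $W_f$ has a neighbourhood that is an $n$-dimensional polyhedron.

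Finally I would assemble these into a single triangulation. Since $f$ is proper, $W_f$ is compact Hausdorff and the stratification of $M$ by singular index and fibre-component combinatorics is finite, so the local models above form a finite cover of $W_f$ whose overlaps are identifiable by PL maps. The clean way to upgrade this to a genuine simplicial structure is Shiota's triangulation theorem, which produces compatible simplicial structures on $M$, $W_f$ and $N$ realising $q_f$ and $\bar{f}$ as simplicial maps; one may first replace $f$ locally by its polynomial normal forms, so that the equivalence relation $\sim_f$ becomes semialgebraic and the standard semialgebraic triangulation theorem applies. I expect this gluing to be the main obstacle: the local computations are routine, whereas verifying that the local polyhedral structures are mutually compatible and cohere into one --- equivalently, that the purely topological quotient $W_f$ actually carries a polyhedral structure --- is the substantive point, and is precisely what Shiota's machinery provides.
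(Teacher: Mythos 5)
The paper offers no proof of this proposition at all: it is stated as a citation of Shiota's theorem on triangulations of maps, and that is also where your argument ultimately lands. Your sketch is consistent with this and adds genuinely useful surrounding material --- the dimension count and the catalogue of local models $[0,\infty)$, $\mathbb{R}$, and cones on finite point sets coming from the quadratic normal forms --- but, as you yourself note, the only non-routine step (that the purely topological quotient $W_f$ carries a global polyhedral structure compatible with $q_f$ and $\bar{f}$) is exactly the content of the cited theorem, so what you have is a correct reduction to the reference rather than an independent proof. Two small points to tighten if you flesh this out: first, the local structure of $W_f$ at $w$ is governed by the entire fibre component $F=q_f^{-1}(w)$, not by a single point of $M$, so each local model must be produced on a \emph{saturated} neighbourhood of $F$ (Ehresmann on the regular part, glued to the finitely many singular-point germs along $F$); your opening move of fixing $F$ is the right one, but the subsequent pointwise phrasing (``near a regular point $f$ is a submersion, so $W_f$ is locally $\mathbb{R}^n$'') elides this. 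Second, the inequality $\dim W_f\le n$ from finite-to-one-ness of $\bar{f}$ needs either the polyhedral structure already in hand or the Hurewicz dimension-lowering theorem for closed maps with zero-dimensional fibres; it is not automatic for an arbitrary quotient space.
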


For Reeb spaces, see also \cite{reeb} for example.

%On commutative rings, we abbreviate the terminology "principal ideal domain" as "{\it PID}". 
\subsection{Explicit fold maps and their Reeb spaces}
We present fundamental and important examples of fold maps here. We explain terminologies on bundles.

For a topological space $X$, an {\it X-bundle} is a bundle whose fiber is $X$.

Hereafter, the structure groups of bundles such that their base spaces and fibers are manifolds are assumed to be (subgroups of) diffeomorphism groups except cases such as situations in a sketch of the proof of Proposition \ref{prop:3}: the bundles are {\it smooth} in a word except several cases. For an integer $k>0$, a {\it linear} bundle is a smooth bundle whose fiber is a $k$-dimensional unit disc (standard closed disc of a fixed diameter) or the ($k-1$)-dimensional unit sphere in ${\mathbb{R}}^{k}$ and whose structure group is a subgroup of the $k$-dimensional orthogonal group $O(k)$ acting linearly and canonically. For a point $p$ in a Eulidean space ${\mathbb{R}}^k$, $||p||$ denotes the distance between the origin $0 \in {\mathbb{R}}^k$ and $p$ where the underlying metric is the Eulidean metric.
\begin{Ex}
\label{ex:1}
\begin{enumerate}
\item
\label{ex:1.1}
A {\it special generic} map is defined as a fold map the index of each singular point of which is $0$.
Canonical projections of unit spheres are simplest stable special generic maps. According to studies \cite{saeki}, \cite{saeki2}, \cite{saekisakuma}, \cite{wrazidlo} and so on, homotopy spheres which are not diffeomorphic to standard spheres do not admit special generic maps into sufficiently high dimensional Euclidean spaces whose dimensions are smaller than those of the homotopy spheres. On the other hand, homotopy spheres except $4$-dimensional ones which are not diffeomorphic to $S^4$ admit special generic maps into the plane such that the restriction to the singular set is an embedding and that the singular set is a circle. Moreover, $4$-dimensional homotopy spheres which are not diffeomorphic to $S^4$ admit no special generic maps into the Euclidean spaces $\mathbb{R}$, ${\mathbb{R}}^2$ and ${\mathbb{R}}^3$. Note also that such homotopy spheres are still undiscovered.

The Reeb space of a (stable) special generic map $f$ from a closed and connected manifold of dimension $m$ into
${\mathbb{R}}^n$ satisfying the relation $m>n \geq 1$ is regarded as an $n$-dimensional compact and connected manifold we can immerse into ${\mathbb{R}}^n$. The image is regarded as the image of a suitable immersion of the $n$-dimensional manifold. The boundary of the Reeb space and the image of the singular set for $q_f$ agree. Conversely, for arbitrary integers $m>n \geq 1$ and an $n$-dimensional compact manifold we can immerse into ${\mathbb{R}}^n$, we can construct a (stable) special generic map from a suitable closed and connected manifold of dimension $m$ into ${\mathbb{R}}^n$ whose Reeb space is diffeomorphic to the $n$-dimensional manifold.

Moreover, we have the following bundles for a general special generic map $f$ from a closed and connected manifold of dimension $m$ into ${\mathbb{R}}^n$.

\begin{enumerate}
\item If we restrict the map $q_f$ to the preimage of the interior of the Reeb space, then it gives a smooth
$S^{m-n}$-bundle over the interior of the Reeb space.
\item If we restrict the map $q_f$ to the preimage of a small collar neighborhood of the boundary of the Reeb space and consider the composition of this with a canonical projection onto the boundary, then it gives a linear $S^{m-n+1}$-bundle over the boundary.
\end{enumerate}
Moreover, for an arbitrary $n$-dimensional compact manifold we can immerse into ${\mathbb{R}}^n$, we can construct a stable special generic map on a suitable closed and connected manifold of dimension $m$ into ${\mathbb{R}}^n$
so that the Reeb space is diffeomorphic to the $n$-dimensional manifold and that these bundles are trivial.
We can replace ${\mathbb{R}}^n$ by an arbitrary manifold $N$ of dimension $n$ with no boundary.
See \cite{saeki}, the articles \cite{kitazawa5} and \cite{kitazawa6} by the author, and so on.
\item
\label{ex:1.2}
(\cite{kitazawa}, \cite{kitazawa2} and \cite{kitazawa4}) Let $l>0$ be an integer.
Let $m>n \geq 1$ be integers. We can construct a stable fold map on a manifold represented as an $l-1$ connected sum of manifolds diffeomorphic to $S^{m-n} \times S^n$ into ${\mathbb{R}}^n$ (this is a standard sphere for $l=1$) satisfying the following properties.
\begin{enumerate}
\item The singular value set is represented as ${\sqcup}_{j=1}^{l} \{x \in {\mathbb{R}}^n \mid ||x||=j\}$.
\item Preimages of regular values are disjoint unions of standard spheres.
\item In the target space ${\mathbb{R}}^n$, the number of connected components of a preimage increases as we go straight to the origin $0 \in {\mathbb{R}}^n$ of the target Euclidean space starting from a point in the complement of the image.\end{enumerate}
The Reeb space is an $n$-dimensional polyhedron which is simple homotopy equivalent to a bouquet of $l-1$ copies of $n$-dimensional spheres for $l>1$ and a point for $l=1$.
We can construct a map satisfying these properties for manifolds obtained by changing the products to total spaces of general smooth $S^{m-n}$-bundles over $S^n$.
\item
\label{ex:1.3}
In \cite{kitazawa5}, \cite{kitazawa6}, \cite{kitazawa7}, and so on, present construction of stable fold maps satisfying the assumption of Proposition \ref{prop:3} in the last and information of the topologies such as the homology groups and the cohomology rings of the Reeb spaces and as a result we can know those of the manifolds. The maps are obtained by finite iterations of surgery operations ({\it bubbling operations}) starting from fundamental fold maps: this operation is introduced by the author in \cite{kitazawa5} respecting ideas of \cite{kobayashi}, \cite{kobayashi2} and \cite{kobayashisaeki} for example. More explicitly, we start from stable maps which arenotso complicated, by changing maps and manifolds by bearing new connected components of singular (value) sets of stable fold maps one after another, we obtain desired maps. The previous example are simplest examples explaining this well.
\end{enumerate}
\end{Ex}
\subsection{Construction of explicit fold maps by new surgery operations (bubbling operations) and the organization of this paper}
In the present paper, we present further studies on construction in Example \ref{ex:1} (\ref{ex:1.3}). We apply bubbling operations. These operations were first defined in \cite{kitazawa7} and improved in \cite{kitazawa5}.
The organization of the paper is as the following.
In the next section, we introduce a {\it bubbling operation} first introduced based on \cite{kitazawa7}. The last section is devoted to a presentation of new results Proposition \ref{prop:2}, Theorem \ref{thm:1} and Theorem \ref{thm:2}. We present construction of new families of explicit fold maps and investigate the cohomology rings of the Reeb spaces. We see that the cohomology rings of these newly obtained Reeb spaces are also obtained first in the present paper. Last we present Proposition \ref{prop:3}. This gives explicit situations where Reeb spaces of fold maps on manifolds know much topological information of the manifolds. We can apply this to new maps obtained by applying Proposition \ref{prop:2}, Theorem \ref{thm:1},Theorem \ref{thm:2}, and so on, explicitly. For Proposition \ref{prop:3}, an essentially same explanation is in the last of \cite{kitazawa7}.

% \thanks{The author would like to express his gratitude to Mitsutaka Murayama and Osamu Saeki for 
% helpful comments and
% constant encouragement.}
\section{Bubbling operations and fold maps such that preimages of regular values are disjoint unions of spheres.}
\label{sec:2}

% A {\it linear} bundle whose fiber is a standard closed (unit) disc or its boundary is a smooth bundle whose structure group acts on the fiber linearly.

%\begin{Prop}
%\label{prop:2}
%There exists a special generic map $f:M \rightarrow {\mathbb{R}}^n$ if and only if $M$ is obtained by gluing the following two manifolds by
 %a bundle isomorphism between the $S^{m-n}$-bundles over the boundary $\partial P$ of a compact manifold $P$, which appear naturally as a subbundle and a restriction of a bundle in the following explanation.
%\begin{enumerate}
%\item A smooth $S^{m-n}$-bundle over a compact smooth manifold $P$ satisfying $\partial P \neq \emptyset$ we can immerse into ${\mathbb{R}}^n$.
%\item A linear $D^{m-n+1}$-bundle over $\partial P$.
%\end{enumerate}

%\end{Prop}

% Note that $P$ is regarded as the Reeb space of a %special generic map on the manifold.
We introduce {\it bubbling operations}, first introduced in \cite{kitazawa5}, referring to \cite{kitazawa7}. More precisely, in the present paper, we essentially consider only the operations satisfying several good properties: {\it ATSS operations}.

Hereafter, $m>n \geq 1$ are integers, $M$ is a smooth, closed and connected manifold of dimension $m$, $N$ is a smooth manifold of dimension $n$ with no boundary, and $f:M \rightarrow N$ is a smooth map.

For a smooth map $c$, we denote the singular set $S(c)$.

\begin{Def}
\label{def:3}
For a stable fold map $f:M \rightarrow N$, let $P$ be a connected
component of $(W_f-q_f(S(f))) \bigcap {\bar{f}}^{-1}(N-f(S(f)))$, regarded as a manifold with no boundary diffeomorphic
to $\bar{f}(P) \subset N$.

Let $l>0$ and $l^{\prime} \geq 0$ be integers. Assume that there exist families $\{S_j\}_{j=1}^{l}$ of finitely many standard spheres and $\{N(S_j)\}_{j=1}^{l}$ of total spaces of linear bundles over these spheres with fibers being unit discs.
We also denote by $S_j$ the image of the section obtained by setting the value as the origin for each fiber diffeomorphic to a unit disc for each $N(S_j)$. Assume that the dimensions of $N(S_j)$ are always $n$. Assume also that there exist immersions $c_j:N(S_j) \rightarrow P$. Furthermore, the following properties hold.
\begin{enumerate}
\item $f {\mid}_{f^{-1}({\bigcup}_{j=1}^l c_j(N(S_j)))} f^{-1}({\bigcup}_{j=1}^l c_j(N(S_j))) \rightarrow {\bigcup}_{j=1}^l c_j(N(S_j))$ gives a trivial $S^{m-n}$-bundle.
\item Crossings of the family $\{{c_j} {\mid}_{\partial N(S_j)}:\partial N(S_j) \rightarrow P\}_{j=1}^l$ are normal and each preimage consists of exactly two points. The number of crossings of this family is finite. Furthermore, For any subset of the family consisting of exactly one immersion, there exists no crossing.
\item Crossings of $\{{c_j} {\mid}_{S_j}:S_j \rightarrow P\}_{j=1}^l$ are normal and each preimage consists of exactly two points. The number of crossings of this family is also finite.
\item We denote the set of all the crossings of the family $\{{c_j} {\mid}_{S_j}:S_j \rightarrow P\}_{j=1}^l$ of the immersions by $\{p_{j^{\prime}}\}_{j^{\prime}=1}^{l^{\prime}}$.
\item For each $p_{j^{\prime}}$ just before, there exist two integers $1 \leq a(j^{\prime}),b(j^{\prime}) \leq l$ and small standard closed discs $D_{2j^{\prime}-1} \subset  S_{a(j^{\prime})}$ and $D_{2j^{\prime}} \subset S_{b(j^{\prime})}$ satisfying the following properties.
\begin{enumerate}
\item $\dim D_{2j^{\prime}-1}=\dim S_{a(j^{\prime})}$.
\item $\dim D_{2j^{\prime}}=\dim S_{b(j^{\prime})}$.
\item $p_{j^{\prime}}$ is in the images of the immersions $p_{j^{\prime}} \in c_{a(j^{\prime})}({\rm Int } D_{2j^{\prime}-1})$ and $p_{j^{\prime}} \in c_{b(j^{\prime})}({\rm Int } D_{2j^{\prime}})$.
\item If $a(j^{\prime})=b(j^{\prime})$, then $D_{2j^{\prime}-1} \bigcap D_{2j^{\prime}}$ is empty.
\item If we restrict the base space of the bundle $N(S_{a(j^{\prime})})$ to $D_{2j^{\prime}-1}$ and that of
the bundle $N(S_{b(j^{\prime})})$ to $D_{2j^{\prime}}$, then the images of the total spaces of these resulting bundles by the immersions $c_{a(j^{\prime})}$ and $c_{b(j^{\prime})}$ agree as subsets in ${\mathbb{R}}^n$.
\item The restrictions to the total spaces of the two immersions just before are embeddings.
\item The set of all the crossings of the family of $\{{c_j} {\mid}_{S_j}:S_j \rightarrow P\}_{j=1}^l$ is the disjoint union of the $l^{\prime}$ corners of the subsets just before each of which is for $1 \leq j^{\prime} \leq l^{\prime}$.
\end{enumerate}
\end{enumerate}
In this situation, the family $\{(S_j,N(S_j),c_j:N(S_j) \rightarrow P)\}_{j=1}^{l}$ is said to be a {\it normal system of submanifolds} compatible with $f$.
\end{Def}
The second property ofr the five listed properties is a bit different from the original one.
%\begin{figure}
%\includegraphics[width=30mm]{imageim20200302.eps}
%\caption{Images of maps around $p_j$ for $\dim D_{a(j)}=\dim D_{b(j)}=1$ (dotted lines are for $S_{a(j)}$ and $S_{b(j)}$ and thick lines are for singular value sets).}
%\label{fig:1}
%\end{figure}
In the situation of Definition \ref{def:3}, let $\{N^{\prime}(S_j) \subset N(S_j)\}_{j=1}^{l}$ be a family of total spaces of subbundles of $\{N(S_j)\}_{j=1}^{l}$ over the manifolds whose fibers are standard closed discs. We assume that the diameters are all $0<r<1$. For a suitable $r$, same properties as presented in Definition \ref{def:3} hold. In other words, we
can obtain another family $\{(S_j,N^{\prime}(S_j),{c_j} {\mid}_{N^{\prime}(S_j)}:N^{\prime}(S_j) \rightarrow P)\}_{j=1}^l$ and this is also regarded as a normal system of submanifolds compatible with $f$: we can identify each fiber, which is a standard closed disc of diameter $r$ with a unit disc via the diffeomorphism defined by the correspondence $t \mapsto \frac{1}{r} t$.

\begin{Def}
\label{def:4}
\begin{enumerate}
\item The familiy $\{(S_j,N(S_j),c_j:N(S_j) \rightarrow P)\}_{j=1}^{l}$ is said to be a {\it wider normal system supporting} the normal system of submanifolds $\{(S_j,N^{\prime}(S_j),{c_j} {\mid}_{N^{\prime}(S_j)}:N^{\prime}(S_j) \rightarrow P)\}_{j=1}^{l}$ compatible with $f$.
\item For a stable fold map $f:M \rightarrow N$ and an integer $l>0$, let $P$ be a connected
component of $(W_f-q_f(S(f))) \bigcap {\bar{f}}^{-1}(N-f(S(f)))$ and let $\{(S_j,N(S_j),c_j:N(S_j) \rightarrow P)\}_{j=1}^{l}$ be a normal system of submanifolds compatible with $f$.
Let $\{(S_j,N^{\prime}(S_j),{c_j}^{\prime}:N^{\prime}(S_j) \rightarrow P)\}_{j=1}^{l}$ be a wider normal system supporting this as before.
Assume that we can construct a stable fold map $f^{\prime}$ on an $m$-dimensional closed manifold $M^{\prime}$
into ${\mathbb{R}}^n$ satisfying the following properties.
\begin{enumerate}
\item $Q$ is the preimage $f^{-1}({\bigcup}_{j=1}^l {c_j}^{\prime}(N^{\prime}(S_j)))$.
\item $M-{\rm Int} Q$ is a compact submanifold of $M^{\prime}$ of dimension $m$ where a suitable smooth embedding $e:M-{\rm Int} Q \rightarrow M^{\prime}$ is defined.
\item $f {\mid}_{M-{\rm Int} Q}={f }^{\prime} \circ e {\mid}_{M-{\rm Int} Q}$ holds.
\item ${f}^{\prime}(S({f}^{\prime}))$ is the disjoint union of $f(S(f))$ and ${\bigcup}_{j=1}^n c_j(\partial N(S_j))$.
\item The indices of points in the preimages of new connected components in the resulting singular value set are always $1$.
\item For each regular value $p$ of the resulting map sufficiently close to the union ${\bigcup}_{j=1}^l c_j(S_j)$, the preimages are disjoint unions of standard spheres.
\end{enumerate}
This yields a procedure of constructing $f^{\prime}$ from $f$. We call it an {\it ATSS operation} to $f$.
The union ${\bigcup}_{j=1}^l c_j(S_j)$ is called the {\it generating normal system} of the operation.
\end{enumerate}
\end{Def}

We show a local fold map around $p_{j^{\prime}}$ in Definition \ref{def:3}. This is also presented in \cite{kitazawa7} with FIGURE 2. $S^{0}$ is the two point set with the discrete topology.

First in the situation of Example \ref{ex:1} (\ref{ex:1.2}), we explain a restriction of a fold map for $l=1$ to the preimage of the set of all points $x \in {\mathbb{R}}^n$ satisfying $||x|| \leq \frac{3}{2}$. We construct a product
bundle $D^n \times S^{m-n}$ over $D^n$. We also set $D^n$ as the set of all points $x \in {\mathbb{R}}^n$
satisfying $||x|| \leq \frac{1}{2}$.
We also set a Morse function ${\tilde{f}}_{m-n,0}$ on a manifold obtained by removing the interior of a standard closed disc of dimension $m-n+1$ embedded smoothly in the interior of $S^{m-n} \times [-1,1]$
onto $[\frac{1}{2},\frac{3}{2}] \subset (0,+\infty) \subset \mathbb{R}$ such that the following four hold.
\begin{enumerate}
\item The preimage of the minimum coincides with the disjoint union of two connected components of the boundary.
\item The preimage of the maximum coincides with one connected component of the boundary.
\item There exists exactly one singular point, and the singular point is in the interior.
\item The value at the singular point is $1$.
\end{enumerate}
We glue the projection of the product bundle and the map
${\tilde{f}}_{m-n,0} \times {\rm id}_{S^{n-1}}:[\frac{1}{2},+\infty) \times S^{n-1}$ where we identify the base space $D^n$ of the product bundle with a standard closed disc of dimension $n$ whose diameter is $\frac{1}{2}$ and whose center is the origin $0 \in {\mathbb{R}}^n$.
By gluing suitably, we have a desired smooth map onto the standard closed disc of dimension $n$ whose center is the origin and whose diameter is $\frac{3}{2}$ in ${\mathbb{R}}^n$. This is a desired map, obtained by restricting the original fold map for $l=1$.
See also \cite{kitazawa} and \cite{kitazawa4} for the fold map for $l=1$.

We denote the resulting map onto the standard closed disc of dimension $n$ by ${\tilde{f}}_{m,n,0}$.

We consider the composition of ${\tilde{f}}_{m-n+\dim D_{2j^{\prime}-1},\dim D_{2j^{\prime}-1},0}$ with a suitable diffeomorphism, we have a smooth map onto a sufficiently small standard closed disc
${D^{\prime}}_{2j^{\prime}-1} \supset D_{2j^{\prime}-1}$ of dimension $\dim D_{2j^{\prime}-1}$ satisfying
$S_{a(j^{\prime})} \supset {D^{\prime}}_{2j^{\prime}-1} \supset {\rm Int} {D^{\prime}}_{2j^{\prime}-1} \supset D_{2j^{\prime}-1}$. We can take a sufficiently small standard closed disc ${D^{\prime}}_{2j^{\prime}} \supset D_{2j^{\prime}}$ of dimension
$\dim D_{2j^{\prime}}$ satisfying similar properties.
%  $S_{b(j)} \supset {D^{\prime}}_{b(j)} \supset {\rm Int} {D^{\prime}}_{b(j)} \supset D_{b(j)}$
We can consider the product map of the previous smooth map and the identity map ${\rm id}_{{D^{\prime}}_{2j^{\prime}}}$. We compose the resulting map with a suitable diffeomorphism onto a manifold, regarded as
$c_{a(j^{\prime})}({D^{\prime}}_{2j^{\prime}-1}) \times c_{b(j^{\prime})}({D^{\prime}}_{2j^{\prime}})$ and containing
$p_{j^{\prime}} \in {\mathbb{R}}^n$ in its interior.

We can restrict the map to a total space of a trivial $D^{m-n}$-bundle over the target space, regarded as
$c_{a(j^{\prime})}({D^{\prime}}_{2j^{\prime}-1}) \times c_{b(j^{\prime})}({D^{\prime}}_{2j^{\prime}})$ and containing
$p_{j^{\prime}}$ in the interior.

For the composition of ${\tilde{f}}_{m-n+\dim D_{2j^{\prime}},\dim D_{2j^{\prime}},0}$ with a suitable diffeomorphism, we can restrict the map to a total space of a trivial $D^{m-n}$-bundle over the target space, diffeomorphic to a standard closed disc of dimension $\dim D_{2j^{\prime}}$. This gives a trivial $D^{m-n}$-bundle. The total space is regarded as a submanifold of the domain of the original map and we can also restrict the composition of
${\tilde{f}}_{m-n+\dim D_{2j^{\prime}},\dim D_{2j^{\prime}},0}$ with the suitable diffeomorphism to the closure of its complement in the domain. The closure of the complement is also a compact submanifold of dimension
$m-n+\dim D_{2j^{\prime}}$. We consider the composition of the product map of the restriction to this complement and the identity map ${\rm id}_{{D^{\prime}}_{2j^{\prime}-1}}$ with a suitable diffeomorphism onto a manifold, regarded as
$c_{a(j^{\prime})}({D^{\prime}}_{2j^{\prime}-1}) \times c_{b(j^{\prime})}({D^{\prime}}_{2j^{\prime}})$.

In a suitable way, we replace the original projection of the trivial $D^{m-n}$-bundle over the target space, identified with $c_{a(j^{\prime})}({D^{\prime}}_{2j^{\prime}-1}) \times c_{b(j^{\prime})}({D^{\prime}}_{2j^{\prime}})$, containing $p_{j^{\prime}}$ in the interior, by the map just before: the composition of the product map of the restriction to the complement of a total space of a trivial $D^{m-n}$-bundle over the target space, diffeomorphic to a standard closed disc of dimension
$\dim D_{2j^{\prime}}$, and the identity map ${\rm id}_{{D^{\prime}}_{2j^{\prime}-1}}$ with a suitable diffeomorphism onto a manifold, regarded as
$c_{a(j^{\prime})}({D^{\prime}}_{2j^{\prime}-1}) \times c_{b(j^{\prime})}({D^{\prime}}_{2j^{\prime}})$. The resulting map is a desired local map and said to be a {\it local canonical fold map around a crossing} for an ATSS operation.

By the definition, the following corollary immediately follows.
\begin{Cor}
\label{cor:1}
Let $f:M \rightarrow N$ be a stable fold map on an $m$-dimensional closed and connected manifold $M$ into an
$n$-dimensional manifold $N$ with no boundary satisfying $m-n>1$. If an ATSS operation is performed to $f$ and a new map $f^{\prime}$ is obtained as a result, then $W_f$ is a proper subset of $W_{{f}^{\prime}}$ and
$\bar{{f}^{\prime}} {\mid}_{W_f}=\bar{f}:W_f \rightarrow N$.
\end{Cor}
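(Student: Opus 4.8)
The plan is to read off both conclusions directly from the definition of an ATSS operation (Definition \ref{def:4}) together with the functorial relation $c = \bar{c} \circ q_c$ defining the Reeb space, using the fact that the operation only alters the map over a small neighborhood of the generating normal system. First I would set $Q = f^{-1}({\bigcup}_{j=1}^l {c_j}^{\prime}(N^{\prime}(S_j)))$ as in Definition \ref{def:4}, and recall that there is an embedding $e:M-{\rm Int}\,Q \rightarrow M^{\prime}$ with $f{\mid}_{M-{\rm Int}\,Q} = f^{\prime}\circ e{\mid}_{M-{\rm Int}\,Q}$. Passing to Reeb spaces, $e$ induces a map $W_f \to W_{f^{\prime}}$ on the sub-Reeb-space coming from $M-{\rm Int}\,Q$; the point is that because $f$ and $f^{\prime}$ agree there, the equivalence relations ${\sim}_f$ and ${\sim}_{f^{\prime}}$ restrict to the same relation on $M-{\rm Int}\,Q$, and moreover the part of $W_f$ coming from ${\rm Int}\,Q$ is — since $Q$ is (by the first property in Definition \ref{def:3}) a trivial $S^{m-n}$-bundle over ${\bigcup}_{j=1}^l {c_j}^{\prime}(N^{\prime}(S_j))$ — collapsed by $q_f$ onto precisely ${\bigcup}_{j=1}^l {c_j}^{\prime}(N^{\prime}(S_j)) \subset P \subset W_f$. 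So $W_f$ is obtained from the Reeb space of $f{\mid}_{M-{\rm Int}\,Q}$ by gluing back a (contractible-fiber) collar, and likewise $W_{f^{\prime}}$ is obtained from the same Reeb space of $f^{\prime}\circ e{\mid}_{M-{\rm Int}\,Q}$ by gluing back the Reeb space of $f^{\prime}{\mid}_{Q^{\prime}}$, where $Q^{\prime} = {f^{\prime}}^{-1}(\text{same neighborhood})$.

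Next I would identify $W_f$ with its image in $W_{f^{\prime}}$. The new singular value set is, by property (d) of Definition \ref{def:4}, the disjoint union of $f(S(f))$ and ${\bigcup}_{j=1}^l c_j(\partial N(S_j))$, and the new singular points all have index $1$ (property (e)), so $m-n>1$ guarantees $m-n+1 \geq 3$, i.e. the preimages of regular values near the new components are spheres $S^{m-n}$ of positive dimension rather than pairs of points; this is what makes the new part of $W_{f^{\prime}}$ genuinely $n$-dimensional and glued onto $W_f$ along a codimension-zero piece of $P$, so that $W_f$ sits inside $W_{f^{\prime}}$ as a closed subset and the inclusion is proper because the bubbling genuinely enlarges the Reeb space (the new index-$1$ fold creates a region of $W_{f^{\prime}}$ not in the image of $q_f$). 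Concretely, $W_{f^{\prime}} - W_f$ is the open part of the Reeb space lying over the bounded region cut off by the new singular value set ${\bigcup}_{j=1}^l c_j(\partial N(S_j))$, as described in the local model ${\tilde{f}}_{m,n,0}$ and the local canonical fold map around a crossing constructed above; since that region is nonempty, the inclusion $W_f \subsetneq W_{f^{\prime}}$ is proper.

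For the second assertion, I would simply chase the defining relations: on $W_f$, viewed inside $W_{f^{\prime}}$ via the above identification, a point is $q_f(x) = q_{f^{\prime}}(e(x))$ for $x \in M-{\rm Int}\,Q$ (extended over the collar over ${\bigcup}_{j=1}^l {c_j}^{\prime}(N^{\prime}(S_j))$ by the trivial-bundle structure), and then $\bar{f^{\prime}}(q_{f^{\prime}}(e(x))) = f^{\prime}(e(x)) = f(x) = \bar{f}(q_f(x))$ by property (c) of Definition \ref{def:4}; since both $\bar{f}$ and $\bar{f^{\prime}}$ are continuous and $q_f(M-{\rm Int}\,Q)$ is dense in $W_f$ (the complement being the lower-dimensional $q_f$-image of the sphere-bundle $Q$, which maps into it), the equality $\bar{f^{\prime}}{\mid}_{W_f} = \bar{f}$ extends from a dense set to all of $W_f$. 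The main obstacle, and the only step requiring genuine care, is the first one: making precise that the Reeb-space quotient behaves well under the decomposition $M = (M-{\rm Int}\,Q) \cup Q$ — i.e. that no two points of $M-{\rm Int}\,Q$ that are ${\sim}_{f^{\prime}}$-equivalent in $M^{\prime}$ fail to be ${\sim}_f$-equivalent in $M$, and that the gluing region $\partial Q = \partial Q^{\prime}$ (an $S^{m-n}$-bundle over ${\bigcup}_{j=1}^l {c_j}^{\prime}(\partial N^{\prime}(S_j))$) is collapsed compatibly on both sides — which follows because $\partial N^{\prime}(S_j)$ consists of regular values of both maps and the connected components of fibers of $f$ and $f^{\prime}$ over that common region coincide. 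Everything else is bookkeeping with the local models already set up before the statement of the corollary.
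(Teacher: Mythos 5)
Your proposal is correct in substance and follows exactly the route the paper intends: the paper gives no argument beyond ``by the definition, the corollary immediately follows,'' and your unpacking via the properties listed in Definition \ref{def:4} (the embedding $e$, the relation $f{\mid}_{M-{\rm Int}\,Q}=f^{\prime}\circ e{\mid}_{M-{\rm Int}\,Q}$, the new index-$1$ singular values over ${\bigcup}_{j}c_j(\partial N(S_j))$ producing new Reeb classes) is the right reading. One auxiliary claim in your last step is wrong, though: $q_f(M-{\rm Int}\,Q)$ is \emph{not} dense in $W_f$, and its complement is not lower-dimensional. For a point $w$ in the interior of ${\bigcup}_{j}{c_j}^{\prime}(N^{\prime}(S_j))\subset P$, the whole $q_f$-class over $w$ is a single connected sphere $S^{m-n}$ (property (1) of Definition \ref{def:3}, together with $m-n>1$) contained in ${\rm Int}\,Q$, so the complement of $q_f(M-{\rm Int}\,Q)$ in $W_f$ is an $n$-dimensional open set and the continuity-plus-density extension of $\bar{f^{\prime}}{\mid}_{W_f}=\bar{f}$ does not apply there. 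This is easily repaired with material you already have: over that region the identification of $W_f$ with a subset of $W_{f^{\prime}}$ is made via the trivial $S^{m-n}$-bundle structures on both sides, under which both $\bar{f}$ and $\bar{f^{\prime}}$ are the tautological map of the region into $N$, so the equality holds there by construction rather than by a limiting argument. With that substitution the proof is complete and agrees with the paper's (unwritten) one.
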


\section{New families of stable fold maps and their Reeb spaces obtained by ATSS operations increasing the numbers of connected components of singular sets by two.}
\label{sec:3}
In this section, we need fundamental theory of graded commutative algebras over commutative rings (PIDs) and cohomology rings.

\begin{Def}
\label{def:5}
Let $A$ be a module over a commutative ring $R$ having a unique identity element $1 \neq 0 \in R$.
Let $a \in A$ be a non-zero element such that the following two hold.
\begin{itemize}
\item $a$ is not represented as $a=ra^{\prime}$ where $(r,a^{\prime}) \in R \times A$ and $r$ is not a unit.
\item For $r \in R$, $ra=0$ if and only if $r=0$.
\end{itemize}  
Let $A$ be represented as an inner direct sum of the submodule generated by the one element set $\{a\}$ and another submodule $B$. We can define a homomorphism $a^{\ast}:A \rightarrow R$ between the modules over $R$ such that $a^{\ast}(a)=1$ and that $a^{\ast}(B)=0$ for any such $B$. We call $a^{\prime}$ the {\it dual} of $a \in A$.
\end{Def}

For a graded commutative algebra $A$ over a graded commutative ring $R$ and a non-negative integer $i \geq 0$, we call the module of all elements of degree $i$ the {\it $i$-th module} of $A$. Hereafter, we assume that the $0$-th module is $R$ and equipped with a canonical action by $R$.

\begin{Def}
\label{def:6}
Let $A_1$ and $A_2$ be graded commutative algebras over a commutative ring $R$ having a unique identity element $1 \neq 0 \in R$.
Let $A$ be a graded commutative algebra over $R$ satisfying the following properties.
\begin{enumerate}
\item The $i$-th module is the direct sum of the $i$-th modules of $A_1$ and $A_2$. 
\item Let $i_1,i_2>0$ be integers.
For $(a_{i_1,1},a_{i_1,2}), (a_{i_2,1},a_{i_2,2}) \in A_1 \oplus A_2$, which are elements of $i_1$-th and $i_2$-th modules, respectively, the product is $(a_{i_1,1}a_{i_2,1}, a_{i_1,2}a_{i_2,2}) \in A_1 \oplus A_2$.
\item For $r \in R$, where we define this as an element of the $0$-th module and take the product of this and an element $(a_{i,1},a_{i,2}) \in A_1 \oplus A_2$ of the $i$-th module where $i>0$. The product is $(ra_{i,1},ra_{i,2}) \in A_1 \oplus A_2$.  
\end{enumerate}
$A$ is called a {\it graded commutative algebra obtained canonically from the direct sum} $A_1 \oplus A_2$.
\end{Def}

\begin{Def}
\label{def:7}
Let $X$ be a topological space regarded as a polyhedron (e. g. a smooth manifold is in a canonical way regarded as a polyhedron). A homology class $c \in H_j(X;A)$ is {\it represented} by a map  $c_{Y,X}:Y \rightarrow X$ regarded as a PL map from a topological space $Y$ regarded as a polyhedron of dimension $j$ if the value of ${c_{Y,X}}_{\ast}$ at the homology class represented by a cycle canonically obtained from $Y$ and the class $c$ agree where the coefficient is a commutative group $A$. Especially, $c$ is {\it represented} by a subspace $Y$ (, regarded as a subpolyhedron,) means that in this situation $c_{Y,X}:Y \rightarrow X$ is the canonical embedding. 
\end{Def}

We first obtain examples as Proposition \ref{prop:2}.
\begin{Prop}
\label{prop:2}
Let $R$ be a PID having an identity element $1 \in R$ satisfying $1 \neq 0 \in R$. Let $t \in R$ be an element represented as $t_{0} \in \mathbb{Z}$ times the identity element $1$ for $i=1,2$.
Let $f:M \rightarrow N$ be a stable fold map on an $m$-dimensional closed and connected manifold into an $n$-dimensional connected and open manifold satisfying $m-n>1$. We also assume at least one of the following conditions.

Let $U$ be an open set in $N-f(S(f))$ such that $f {\mid}_{f^{-1}(U)}:f^{-1}(U) \rightarrow U$ gives a trivial $S^{m-n}$-bundle.

In this situation, by an ATSS operation to $f$, we have a new fold map $f^{\prime}$ satisfying the following properties if $n$ is even.

\begin{enumerate}
\item $H_i(W_{f^{\prime}};R)$ is isomorphic to $H_i(W_f;R)$ for $i \neq \frac{n}{2}$ and $H_i(W_f;R) \oplus R \oplus R$ for $i=\frac{n}{2},n$.
\item The cohomology group $H^{i}(W_{f^{\prime}};R)$ is isomorphic to $H^i(W_f;R)$ for $i \neq \frac{n}{2}$ and $H^i(W_f;R) \oplus R \oplus R$ for $i=\frac{n}{2},n$: we can set isomorphisms between modules over $R$ for identifications respecting Definition \ref{def:6} and we will abuse these identifications. The cohomology group $H^{i}(W_{f};R)$ is isomorphic to $H^i(W_{f^{\prime}};R)$ for $i \neq \frac{n}{2},n$ and identified with $H^i(W_f;R) \oplus \{0\} \oplus \{0\}$ in $H^i(W_f;R) \oplus R \oplus R$ before for $i=\frac{n}{2},n$ via a map $i_{f,f^{\prime}}(x):= (x,0,0)$ for $x \in H^{i}(W_f;R)$. Furthermore this gives a monomorphism over $R$ from $H^{i}(W_{f};R)$ into $H^{i}(W_{f^{\prime}};R)$ for all $i$: for $i \neq \frac{n}{2},n$, $i_{f,f^{\prime}}(x)=x$ for $x \in H^{i}(W_{f};R)$, which we abbreviated just before.
\item We can define a graded commutative algebra $A_R$ over $R$ such that the $i$-th module is isomorphic to $\{0\}$ for $i \neq 0,\frac{n}{2},n$ and isomorphic to $R \oplus R$ and identified with $\{0\} \oplus R \oplus R$ in $H^i(W_f;R) \oplus R \oplus R$ before for $i=\frac{n}{2},n$ and that the following rules are satisfied.
\begin{enumerate}
\item Under the explained identifications, the product of $(0,1,0) \in \{0\} \oplus R \oplus R \subset H^{\frac{n}{2}}(W_f;R) \oplus R \oplus R$ and $(0,0,1) \in \{0\} \oplus R \oplus R \subset H^{\frac{n}{2}}(W_f;R) \oplus R \oplus R$ is $(0,t,t) \in \{0\} \oplus R \oplus R \subset H^{n}(W_f;R) \oplus R \oplus R$.
\item Under the explained identifications, the product of $(0,1,0) \in \{0\} \oplus R \oplus R \subset H^{\frac{n}{2}}(W_f;R) \oplus R \oplus R$ and $(0,1,0) \in \{0\} \oplus R \oplus R \subset H^{\frac{n}{2}}(W_f;R) \oplus R \oplus R$ is $(0,0,0) \in \{0\} \oplus R \oplus R \subset H^{n}(W_f;R) \oplus R \oplus R$.
\item Under the explained identifications, the product of $(0,0,1) \in \{0\} \oplus R \oplus R \subset H^{\frac{n}{2}}(W_f;R) \oplus R \oplus R$ and $(0,0,1) \in \{0\} \oplus R \oplus R \subset H^{\frac{n}{2}}(W_f;R) \oplus R \oplus R$ is $(0,0,0) \in \{0\} \oplus R \oplus R \subset H^{n}(W_f;R) \oplus R \oplus R$ 
\end{enumerate}
\end{enumerate}
We also have a new fold map $f^{\prime}$ satisfying the following properties if $n$ is an arbitrary positive integer and $k$ is a positive integer $2k<n$.
\begin{enumerate}
\item $H_i(W_{f^{\prime}};R)$ is isomorphic to $H_i(W_f;R)$ for $i \neq k,n-k,n$, $H_i(W_f;R) \oplus R$ for $i=k,n-k$ and $H_n(W_f;R) \oplus R \oplus R$ for $i=n$.
\item The cohomology group $H^{i}(W_{f^{\prime}};R)$ is isomorphic to $H^i(W_f;R)$ for $i \neq k,n-k,n$, $H^i(W_f;R) \oplus R$ for $i=k,n-k$ and $H^i(W_f;R) \oplus R \oplus R$ for $i=n$: we can set isomorphisms between modules over $R$ for identifications respecting Definition \ref{def:6}. The cohomology group $H^{i}(W_{f};R)$ is isomorphic to $H^i(W_{{f}^{\prime}};R)$ for $i \neq k,n-k,n$, identified with $H^i(W_f;R) \oplus \{0\}$ in $H^i(W_f;R) \oplus R$ before for $i=k,n-k$ via a map $i_{f,f^{\prime}}(x):= (x,0)$ for $x \in H^{i}(W_f;R)$ and identified with $H^i(W_f;R) \oplus \{0\} \oplus \{0\}$ in $H^i(W_f;R) \oplus R \oplus R$ before for $i=n$ via a map $i_{f,f^{\prime}}(x):= (x,0,0)$ for $x \in H^{i}(W_f;R)$. Furthermore the maps give a monomorphism over $R$ from $H^{i}(W_{f};R)$ into $H^{i}(W_{f^{\prime}};R)$ for all $i$: for $i \neq k,n-k,n$, $i_{f,f^{\prime}}(x)=x$ for $x \in H^{i}(W_{f};R)$, which we abbreviated just before.
\item We can define a graded commutative algebra $A_R$ over $R$ such that the $i$-th module is isomorphic to $\{0\}$ for $i \neq 0,k,n-k,n$, isomorphic to $R$ and identified with $\{0\} \oplus R$ in $H^i(W_f;R) \oplus R$ before for $i=k,n-k$ and isomorphic to $R \oplus R$ and identified with $\{0\} \oplus R \oplus R$ in $H^i(W_f;R) \oplus R \oplus R$ before for $n$ and that the following rules are satisfied.
\begin{enumerate}
\item Under the explained identifications, the product of $(0,1) \in \{0\} \oplus R \subset H^{k}(W_f;R) \oplus R$ and $(0,1) \in \{0\} \oplus R \subset H^{n-k}(W_f;R) \oplus R$ is $(0,t,t) \in \{0\} \oplus R \oplus R \subset H^{n}(W_f;R) \oplus R \oplus R$.
\item Under the explained identifications, the square of $(0,1) \in \{0\} \oplus R \subset H^{k}(W_f;R) \oplus R$ vanishes.
\item Under the explained identifications, the square of $(0,1) \in \{0\} \oplus R \subset H^{n-k}(W_f;R) \oplus R$ vanishes. 
\end{enumerate}
\end{enumerate}

Last the resulting cohomology ring $H^{\ast}(W_{f};R)$ is regarded as a subalgebra of $H^{\ast}(W_{f^{\prime}};R)$ via the following rules.
\begin{enumerate}
\item For two elements $c_1 \in H^{i_1}(W_{f};R)$ and $c_2 \in H^{i_2}(W_{f};R)$ where $i_1,i_2>0$, we consider the natural identifications so that the elements are regarded as $(c_j,0) \in H^{i_j}(W_{f};R) \oplus \{0\} \subset H^{i_j}(W_{f^{\prime}};R)$ for $j=1,2$ and the product is $(c_1c_2,0) \in H^{i_1+i_2}(W_{f};R) \oplus \{0\} \subset H^{i_1+i_2}(W_{f^{\prime}};R)$: we can identify this with $c_1c_2 \in H^{i_1+i_2}(W_{f};R)$. 
\item For elements $r \in H^{0}(W_{f};R)$ and $c \in H^{i}(W_{f};R)$ where $i>0$, we consider the natural identifications so that they are regarded as $r \in H^{0}(W_{f};R)$ where the group is identified with $H^{0}(W_{f^{\prime}};R)$ and $(c,0) \in H^{i}(W_{f};R) \oplus \{0\} \subset H^{i}(W_{f^{\prime}};R)$, respectively, and the product is $(rc,0) \in H^{i}(W_{f};R) \oplus \{0\} \subset H^{i}(W_{f^{\prime}};R)$: we can identify this with $rc \in H^{i}(W_{f};R)$. 
\end{enumerate}
\end{Prop}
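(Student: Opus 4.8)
The plan is to describe the Reeb space $W_{f^{\prime}}$ very explicitly in terms of $W_f$ using the local normal forms that define the ATSS operation, and then to feed that description into a Mayer--Vietoris argument together with elementary intersection theory carried out in the closed manifold pieces of $W_{f^{\prime}}$. Write the generating normal system of the operation as $\bigcup_{j=1}^{2}c_j(S_j)$ with $S_j=S^{d_j}$, where $d_1=d_2=\frac n2$ in the even case and $\{d_1,d_2\}=\{k,n-k\}$ in the other case, and where $N(S_j)$ is the $n$--dimensional disc--bundle neighbourhood. Over the region $c_j(N(S_j))\subset N$ the map $f$ restricts to a trivial $S^{m-n}$--bundle, so by the very definition of the Reeb space $W_f$ agrees there, via $\bar f$, with $\bigcup_j c_j(N(S_j))$. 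By Corollary~\ref{cor:1}, $W_f\subsetneq W_{f^{\prime}}$ and $\bar{f^{\prime}}|_{W_f}=\bar f$, so it only remains to identify the new part. Over a point of $S_j$ away from the crossings the local canonical fold map is a restriction of $\tilde f_{m-d_j,n-d_j,0}$ multiplied by an identity map, whose effect on the Reeb space is to adjoin a second sheet living over the inner disc bundle and capped off along the new index--$1$ singular value set; over a crossing $p_{j^{\prime}}$ one uses the local canonical fold map around a crossing built in Section~\ref{sec:2}. Assembling these pieces, the new part is $B=\bigcup_{j=1}^2 B_j$, where $B_j$ is the double of $N(S_j)$ --- a closed $n$--manifold, diffeomorphic to $S^{d_j}\times S^{n-d_j}$ when $N(S_j)$ is trivial --- glued to $W_f$ along the ``old half'' $N(S_j)\subset W_f$, with $B_1$ and $B_2$ meeting only near the finitely many crossings, where $S_1$ and $S_2$ intersect transversally. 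Thus $W_{f^{\prime}}=W_f\cup_A B$ with $A=W_f\cap B\simeq S_1\cup S_2$.

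\textbf{Additive computation.}
Apply the Mayer--Vietoris sequence of $W_{f^{\prime}}=W_f\cup_A B$. For each $j$ one has $H_\ast(B_j;R)\cong R$ in degrees $0,d_j,n-d_j,n$ (with the evident modification for a non--trivial bundle), and the inclusion of the old half $N(S_j)\hookrightarrow B_j$ is an isomorphism on $H_0$ and $H_{d_j}$; feeding this together with $A\simeq S_1\cup S_2$ and the explicit inclusions $A\hookrightarrow W_f$, $A\hookrightarrow B$ into the sequence, one reads off that the only change from $H_\ast(W_f;R)$ is one extra copy of $R$ in degree $n-d_j$ and one in degree $n$ for each $j=1,2$. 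Summing over $j$ gives exactly the groups asserted (two extra copies in degrees $\frac n2$ and $n$ in the even case; one extra copy in each of degrees $k$ and $n-k$ and two in degree $n$ in the other case). The sequence splits, which produces the split $R$--monomorphism $i_{f,f^{\prime}}\colon H^\ast(W_f;R)\to H^\ast(W_{f^{\prime}};R)$, $x\mapsto(x,0,\dots)$, compatible with the direct--sum identifications of Definition~\ref{def:6}, and equal to the canonical isomorphism in the degrees carrying no new summand.

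\textbf{Ring structure.}
Choose geometric representatives for the new classes: the generator $\alpha_j$ of the new summand in degree $n-d_j$ is represented by a fibre sphere of $N(S_j)$ sitting in $B_j$ --- equivalently, inside the closed $n$--manifold $B_j$ it is the Lefschetz dual of the core $S_j$ --- and the two new degree--$n$ classes are the duals $[B_1]^\ast,[B_2]^\ast$ (Definition~\ref{def:5}) of the fundamental classes. Cup products are then computed by restriction to the three pieces $W_f$, $B_1$, $B_2$, using that each $B_l$ is a closed $n$--manifold while every new class restricts to $0$ on $W_f$. For $i\ne j$, $(\alpha_i\cup\alpha_j)|_{B_l}$ is computed by intersecting $S_i$ and $S_j$ inside $B_l$ near the crossings lying there; the $l^{\prime}$ normal crossings counted with sign give the integer $t_0$, and since a single crossing point lies in both $B_1$ and $B_2$ and is dual there to the respective fundamental classes (orientations of $B_1,B_2$ fixed compatibly), one obtains $\alpha_1\cup\alpha_2=(0,t,t)$ with $t=t_0\cdot 1$. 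For $i=j$, $\alpha_j^2$ is the self--intersection class of $S_j$, i.e.\ the image of the Euler class of $N(S_j)$; in the non--even case it lands in a zero group ($H^{k}(S^{n-k};R)=0$ since $2k<n$, while $H^{2(n-k)}(W_{f^{\prime}};R)=0$ because $W_{f^{\prime}}$ is an $n$--dimensional polyhedron by Proposition~\ref{prop:1}), and in the even case it vanishes by the standing assumption on the normal bundles. Products of old classes among themselves and of old with new classes are as stated by naturality of the cup product under $W_f\hookrightarrow W_{f^{\prime}}$; this also yields the final assertion that $H^\ast(W_f;R)$ is a subalgebra of $H^\ast(W_{f^{\prime}};R)$. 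Collecting the $\alpha_j$, the $[B_j]^\ast$ and the relations above defines the graded commutative algebra $A_R$.

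\textbf{Main difficulty.}
The delicate point is the combination of the first step with the crossing computation in the third: one must extract from the explicit local canonical fold map around a crossing that, in the overlap of $N(S_1)$ and $N(S_2)$, the new sheets assemble so that $B_1$ and $B_2$ genuinely meet transversally along the images of $S_1$ and $S_2$ and contribute nothing to $\alpha_1\cup\alpha_2$ beyond the signed crossing count --- and, to get the symmetric value $(t,t)$ rather than $(t,-t)$, one must pin down compatible orientations of $B_1$ and $B_2$ (and of the ambient pieces) at each crossing. A lesser nuisance is the bookkeeping for possibly non--trivial disc bundles $N(S_j)$ and for the small values of $n$ and $k$ in which $H_\ast(A;R)$ interacts with the Mayer--Vietoris splitting; here the representing--cycle descriptions of the classes are robust enough to settle the outcome.
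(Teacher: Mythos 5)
Your geometric description of $W_{f^{\prime}}$ (the doubles $B_j$ of the $N(S_j)$, glued to $W_f$ along the old halves and to each other along identified $n$-discs at the crossings) and the Mayer--Vietoris computation of the additive statements are essentially the paper's argument, and that part is sound. The gap is in the cup product. You compute $\alpha_1\cup\alpha_2$ by asserting that ``the $l^{\prime}$ normal crossings counted with sign give the integer $t_0$.'' This cannot be correct as stated: $c_1(S_1)$ and $c_2(S_2)$ are immersed spheres of complementary dimension placed inside the open set $U$ over which the bundle is trivial, where they are null-homologous, so their algebraic intersection number is $0$; this is exactly why the construction arranges the $2|t_0|$ crossings into $|t_0|$ cancelling pairs. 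A signed crossing count therefore yields $0$, not $t_0$. Relatedly, the identification of $\alpha_2{\mid}_{B_1}$ with the Poincar\'e dual of ``$S_2$ intersected with $B_1$'' that your computation implicitly invokes is not available: $S_2\cap B_1$ is a union of discs rather than a cycle in $B_1$, and $W_{f^{\prime}}$ is not a manifold, so the dual class $\alpha_2$ in the sense of Definition~\ref{def:5} has no canonical Thom-class representative supported near $S_2$.

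The quantity that actually produces $t$ is of a different nature. One must evaluate $\alpha_2$ on the class of the core $S_1\times\{\ast\}$ of $B_1$ inside $W_{f^{\prime}}$, and this is governed not by the crossing pattern of $S_1$ and $S_2$ in $U$ but by a choice made in the ATSS operation at each crossing: since the base and fibre directions of $B_1$ and $B_2$ are interchanged under the identification of the $n$-discs there, one chooses which hemisphere of the fibre sphere of $B_1$ is matched at the second crossing of each pair, and accordingly the core of $B_1$ becomes homologous in $W_{f^{\prime}}$ either to the fibre sphere of $B_2$ or to zero (and symmetrically for the core of $B_2$, which forces the two evaluations on $[B_1]$ and $[B_2]$ to agree, giving $(0,t,t)$ rather than $(0,t,-t)$). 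Summing these choices over the $|t_0|$ pairs is what realizes the value $t$; the same configuration of crossings supports several different products. This is the content of the paper's discussion referring to FIGURE~3 of \cite{kitazawa7}, and it is precisely the point your ``main difficulty'' paragraph flags but does not resolve; orientation bookkeeping at the crossings alone cannot recover it.
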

We prove this using tools and ideas used for the proof of Proposition 3 of \cite{kitazawa7}.  
\begin{proof}
In the proof, notation in Definition \ref{def:3} and around this will be used. Let us find a suitable normal system of submanifolds compatible with $f$. As this, we will find $\{(S_1,N(S_1),c_1:N(S_1) \rightarrow P), (S_2,N(S_2),c_2:N(S_2) \rightarrow P)\}$ such that $S_1$ is a standard sphere of dimension $0<k \leq \frac{n}{2}$ and that $S_2$ is standard sphere of dimension $n-k$.

We can take $\{(S_1,N(S_1),c_1:N(S_1) \rightarrow P), (S_2,N(S_2),c_2:N(S_2) \rightarrow P)\}$ so that the family of immersions has exactly $|t_{0}| \geq 0$ pairs of crossings ($2|t_{0}|$ crossings), that the normal bundle of the immersion is trivial and that $c_1(N(S_1)) \bigcup c_2(N(S_2)) \subset U$.
We can perform an ATSS operation whose generating normal system is $c_1(S_1) \bigcup c_2(S_2)$ to obtain a new fold map $f^{\prime}$. We use local canonical fold maps around crossings in section \ref{sec:2} around crossings in $c_1(S_1) \bigcup c_2(S_2)$. Around the remaining singular values and regular values, we construct products of Morse functions with exactly one singular point, which is of index $1$, and identity maps on ($n-1$)-dimensional manifolds and trivial $S^{m-n}$-bundles over $n$-dimensional manifolds, respectively. We can glue all the local maps together. We can perform the construction thanks to the assumption that $f {\mid}_{f^{-1}(U)}:f^{-1}(U) \rightarrow U$ gives a trivial $S^{m-n}$-bundle.

By the definition of a normal bubbling operation, $W_{f^{\prime}}$ is regarded as a space obtained by attaching a polyhedron $A$ we can obtain by identifying exactly $|l_{0}|$ pairs of disjointly embedded PL discs of dimension $n$, represented as a product of of a $k$-dimensional disc and an ($n-k$)-dimensional discs, two smooth (PL) manifolds $S_1 \times S^{n-k}$ and $S^2 \times S^k$ to $B:={\bar{f}}^{-1}(c_1(N(S_1))) \bigcup {\bar{f}}^{-1}(c_2(N(S_2))) \subset W_f$. Moreover, $S_1 \times D^{n-k}$ and $S^2 \times D^k$ are attached: $D^{n-k} \subset S^{n-k}$ and $D^k \subset S^k$ are hemispheres.  
%$i_{B,f}$ and $i_{B,A}$ denote the canonical inclusions of $B$ into $W_f$ and $A$, respectively. We have the following exact sequence.

%$\begin{CD}
%@>   >> H_i(B;R) @> {i_{B,f}}_{\ast} \oplus {i_{B,A}}_{\ast} >> H_i(W_f;R) \oplus H_i(A;R)
%\end{CD}$
%
%$\begin{CD}
%@>   >>  H_i(W_{f^{\prime}};R) @>    >> H_{i-1}(B;R) 
%\end{CD}$

%$\begin{CD}
%@>  {i_{B,f}}_{\ast} \oplus {i_{B,A}}_{\ast}  >> H_{i-1}(W_f;R) \oplus H_{i-1}(A;R) @>   >>
%\end{CD}$.

%By observing the definition of a normal bubbling operation and the topologies of these spaces, the image of ${i_{B,f}}_{\ast}$ is zero except for the case $i=0$ and ${i_{B,A}}_{\ast}$ is injective. 

We explain the topologies of the Reeb spaces and the polyhedra without using sophisticated terminologies on Mayer-Vietoris sequences and other notions of algebraic topology. Rigorous understandings via these terminologies are left to readers and see also Proposition 3 of \cite{kitazawa7}.

$H_i(W_{f^{\prime}};R)$ is isomorphic to $H_i(W_f;R)$ for $i \neq k$, $H_i(W_f;R) \oplus R$ for $i=k \neq \frac{n}{2}$, $H_i(W_f;R) \oplus R \oplus R$ for $i=k=\frac{n}{2}$ and $H_i(W_f;R) \oplus R$ for $i=n-k \neq \frac{n}{2}$. We can identify the modules for these cases. 

We explain about the summands $R$. For $i=k,n-k$, the summands $R$ are seen to be generated by the class represented by $\{\ast\} \times S^{n-k} \subset S_1 \times S^{n-k}$ or $\{\ast\} \times S^{k} \subset S_2 \times S^{k}$ in the original manifolds to obtain $A$ where $\ast$ is a suitable point in $S_1$ or $S_2$. In the case $i=n$, the summands $R$ are seen to be generated by the classes represented by suitable subpolyhedra of $A$ obtained after the deformation of the original manifolds to obtain $A$, which are $n$-dimensional polyhedra obtained from the original $n$-dimensional closed, connected and orientable manifolds.

We discuss the cohomology rings. By the construction, the resulting cohomology ring is isomorphic to and can be identified with a graded commutative algebra obtained canonically from the direct sum of $H^{\ast}(W_f;R)$ and a new graded commutative algebra $A_R$. We denote the $j$-th module of $A_R$ by $A_{R,j}$. This is zero unless $j=k,n-k,n$. 
Both in the cases where $k \neq n-k$ and $k=\frac{n}{2}$, we consider the products in $H^n(W_{f^{\prime}};R)$ of the two classes in $H^k(W_{f^{\prime};R})$ and $H^{n-k}(W_{f^{\prime};R})$ generating the summands $R$.
$A_{R,i}$ is isomorphic to and identified with $R$ for $i=k,n-k$ and $k \neq \frac{n}{2}$. $A_{R,i}$ is isomorphic to and identified with $R \oplus R$ for an even $n$ and $i=\frac{n}{2}$. $A_{R,n}$ is isomorphic to and identified with $R \oplus R$.

We note that $H^i(W_{f^{\prime}};R)$ is isomorphic to $H^i(W_f;R)$ for $i \neq k$, $H^i(W_f;R) \oplus R$ for $i=k \neq \frac{n}{2}$, $H^i(W_f;R) \oplus R \oplus R$ for $i=k=\frac{n}{2}$ and $H^i(W_f;R) \oplus R$ for $i=n-k \neq \frac{n}{2}$ and that we can identify the modules for these cases respecting Definition \ref{def:6} as explained.

For $k \neq \frac{n}{2}$, consider the product of $1 \in R$, identified with $A_{R,k}$, and $1 \in R$, identified with $A_{R,n-k}$ and for $k = \frac{n}{2}$, consider the product of $(1,0) \in R \oplus R$ and $(0,1) \in R \oplus R$, where the modules are identified with $A_{R,k}$

In the case where the number $t_{0}$ is $0$, the product vanishes (see also the proofs of some propositions and theorems of \cite{kitazawa6}). In the case where the number $t_{0}$ is $1$, the product can be $(1,1) \in R \oplus R$, identified with $A_{R,n}$. Let us explain this more precisely. 

We can define cohomology classes regarded as the duals of the homology classes represented by $\{{\ast}_2\} \times S^{k} \subset S_2 \times S^{k}$ and $\{{\ast}_1\} \times S^{n-k} \subset S_1 \times S^{n-k}$ before: the value of the dual at the original homology class is the identity element $1 \in R$, the value at the remaining class of the two classes is zero, and the values at classes in $H_{i}(W_f;R) \oplus \{0\}$ are zero where natural identifications with $H_i(W_{f^{\prime}};R)$ before are considered ($i=k,n-k$). We evaluate the value of the product at the classes represented by the $n$-dimensional polyhedra obtained from the products of two standard spheres in $A$: more precisely, the classes obtained after the manifolds are deformed and attached to $W_f$. For the original manifolds $S_1 \times S^{n-k}$ and $S_2 \times S^{k}$, the class represented by $S_1 \times \{{\ast}^{\prime}\} \subset S_1 \times S^{n-k}$ can be mapped to the class represented by $\{{\ast}^{\prime \prime}\} \times S^{k}$ in the other original manifold deformed and attached to and regarded as a subspace in $W_{f^{\prime}}$. It also can be mapped to zero if we perform an ATSS operation in a suitable way. The class represented by $S_2 \times \{{\ast}^{\prime \prime \prime}\} \subset S_2 \times S^{k}$ can be mapped to zero.

We give an additional explanation on this using FIGURE 3 of \cite{kitazawa7}.

We consider one point in the pair $(p_1,p_2)$ of the crossing in $c_1(S_1)$: take $p_2$. We can take $D_{1} \subset S_1$, $D_{2} \subset S_2$, $D_{3} \subset S_1$ and $D_{4} \subset S_2$ as in Definition \ref{def:3}.
Dots denote $S_1 \times \{{{\ast}_1}^{\prime}\} \subset S_1 \times S^{n-k}$ and $S_2 \times \{{{\ast}_2}^{\prime}\} \subset S_2 \times S^{k}$ deformed and attached to the Reeb space. We represent the original manifold $S_1 \times S^{n-k}$ by $S_1 \times ({D^{n-k}}_1 \bigcup  {D^{n-k}}_2)$ where ${D^{n-k}}_1$ and ${D^{n-k}}_2$ are hemispheres and the boundaries are an equator of $S^{n-k}$. We can perform an ATSS operation so that $D_2$ and $\{q_1\} \times {D^{n-k}}_1 \subset D_1 \times S^{n-k}$ agree in $W_{f^{\prime}}$ for a suitable point $q_1$. We can perform an ATSS operation so that $D_4$ and $\{q_2\} \times {D^{n-k}}_1 \subset D_3 \times S^{n-k}$ agree and also perform an ATSS operation so that $D_4$ and $\{q_2\} \times {D^{n-k}}_2 \subset D_3 \times S^{n-k}$ agree in $W_{f^{\prime}}$ for a suitable point $q_2$. We can do so that either of the following two holds in $W_{f^{\prime}}$.
\begin{enumerate}
\item $S_1 \times \{{{\ast}_1}^{\prime}\} \subset S_1 \times S^{n-k}$ is mapped so that the map is homotopic to the embedding into a fiber of the trivial bundle $S_2 \times S^{k}$ over $S^2$.  $S_2 \times \{{{\ast}_2}^{\prime}\} \subset S_2 \times S^{k}$ is mapped so that the map is homotopic to the embedding into a fiber of the trivial bundle $S_1 \times S^{n-k}$ over $S^2$. 
\item $S_1 \times \{{{\ast}_1}^{\prime}\} \subset S_1 \times S^{n-k}$ is mapped so that the map is homotopic to a constant map. $S_2 \times \{{{\ast}_2}^{\prime}\} \subset S_2 \times S^{k}$ is mapped so that the map is homotopic to a constant map.
\end{enumerate}
%The key ingredient is whether the two connected components of ${\bar{f^{\prime}}}^{-1}(c_1(\partial D_{(4)}))$ are non-manifold points: we can consider another normal bubbling operation to $f$ to exchange the types of the topologies around these two connected components without changing remaining parts including the topological structure around $p_1$ from the original $f^{\prime}$.  

The argument yields the fact that the values of product before at the classes represented by the two $n$-dimensional polyhedra or the classes obtained after the original manifolds, which are diffeomorphic to products of two spheres, are deformed and attached to $W_f$, are both $0$ or both $1$. As a result, the product is represented as $(1,1) \in R \oplus R$, identified with $\{0\} \oplus R \oplus R$ and $\{0\} \oplus A_{R,n} \subset H^n(W_{f^{\prime}};R)$ (under the suitable identification of $H^n(W_f;R) \oplus A_{R,n}$ with $H^n(W_{f^{\prime}};R)$) or the product vanishes.

For a general integer $t_0$ and $t \in R$, we can argue similarly.
Moreover, on the square of each of the $k$-th and ($n-k$)-th cohomology classes, we can see the vanishing easily observing the topologies of the Reeb spaces. 

By the construction via the ATSS operation, we can easily see that last statement holds: $H^{\ast}(W_f;R)$ is regarded as a subalgebra of $H^{\ast}(W_{f^{\prime}};R)$.

This completes the proof.

\end{proof}
%\begin{figure}
%\includegraphics[width=30mm]{imagenumbered20200304.eps}
%\caption{The numbers of connected components of the preimages of regular values close to $p_j$ in Definition \ref{def:3} or FIGURE \ref{fig:1}.}
%\label{fig:3}
%\end{figure}
We introduce a {\it connected sum} of two stable fold maps. 

For integers $m>n \geq 1$, let $M_i$ ($i=1,2$) be a closed and connected manifold of dimension $m$ and $f_1:M_1 \rightarrow N$ be a stable fold map such $N-f_1(M_1)$ is not empty
and $f_{1.5},:M_2 \rightarrow {\mathbb{R}}^n$ be a stable fold map.

Let $e_1 \in {\mathbb{R}}^n$ be the point such that the first component is $1$ and that the remaining components are all $0$. Let ${D}_{1,1}^{n}:=\{x=(x_1,\cdots,x_{n}) \in {\mathbb{R}}^{n} \mid ||x-e_1|| \leq 1\}$.
We consider the canonical projection of a unit sphere $S^m \subset {\mathbb{R}}^{m+1}$ to ${\mathbb{R}}^n$ defined as the composition of the canonical inclusion with the projection ${\pi}_{m+1,n}((x_1, \cdots, x_n, \cdots, x_{m+1}))=(x_1, \cdots, x_n)$ and the restriction to the preimage of ${D}_{1,1}^{n}$: we denote the restriction by ${\pi}_{m,n,SB}$. 

We consider a composition of $f_{1.5}$ with an embedding $e:{\mathbb{R}}^n \rightarrow S^n$ and we denote the resulting map by $f_2$. We can take a standard closed disc $P_i$ of dimension $n$ such that there exists a pair $({\Phi}_i,{\phi}_i)$ of diffeomorphisms satisfying the relation ${\phi}_i \circ f_i {\mid}_{{f_i}^{-1}(P_i)}={\pi}_{m,n,SB} \circ {\Phi}_i$ for $i=1,2$ (the target of the left map is $P_i$ and that of the right map is ${D}_{1,1}^{n}$). 

We set $N_1:=N$ and $N_2:=S^n$.
 We can glue the maps ${f_i} {\mid}_{{f_i}^{-1}(N_i-{\rm Int} P_i)}:{f_i}^{-1}(N_i-{\rm Int} P_i) \rightarrow N_i-{\rm Int} P_i$ ($i=1,2$) on the boundaries to obtain a new map and by composing a diffeomorphism from the new target space to $N_1=N$, we obtain a smooth map into $N_1=N$ so that the resulting domain is represented as a connected sum of the original manifolds $M_1$ and $M_2$. The resulting fold map is a {\it connected sum} of $f_1$ and $f_{1.5}$. For $N:={\mathbb{R}}^n$, a connected sum is essentially equivalent to one presented in \cite{kitazawa5}, \cite{kitazawa6}, and so on.

Proposition \ref{prop:2} yields the following main theorem.

\begin{Thm}
\label{thm:1}
Let $R$ be a PID having an identity element $1 \in R$ satisfying $1 \neq 0 \in R$. 
Let $f:M \rightarrow N$ be a stable fold map on an $m$-dimensional closed and connected manifold into an $n$-dimensional connected and open manifold satisfying $m-n>1$.
Let $k>0$ be an integer satisfying $2k \leq n$. Let $t \in R$ be an element represented as $t_{0} \in \mathbb{Z}$ times the identity element $1$.
Let $A_R$ be a graded commutative algebra over $R$ isomorphic to the cohomology ring whose coefficient ring is $R$ of a bouquet of a finite number of closed and connected manifolds whose dimensions are smaller than $n$ and which can be embedded into ${\mathbb{R}}^n$.

By a connected sum of $f$ and a special generic map into ${\mathbb{R}}^n$ such that the restriction to the singular set is an embedding, we can construct a stable fold map $f_0$. By an ATSS operation to $f_0$, we have a new fold map $f^{\prime}:M^{\prime} \rightarrow N$ satisfying the following properties.
\begin{enumerate}
\item The cohomology ring $H^{\ast}(W_{f^{\prime}};R)$ is isomorphic to and identified with a graded commutative algebra obtained canonically from the direct sum of $H^{\ast}(W_{f};R)$ and a graded commutative algebra $B_R$ over $R$: we denote the $i$-th module of $B_R$ by $B_{R,i}$ and we abuse this identification in explaining the products and so on.
\item $A_R$ is regarded as a subalgebra of $B_R$. We denote the $i$-th module of $A_R$ by $A_{R,i}$ and as a module over $R$, $B_{R,i}$ is represented as a direct sum of $A_{R,i}$ and a suitable module $C_{R,i}$ over $R$: we identify $B_{R,i}$ and $A_{R,i} \oplus C_{R,i}$ via a suitable isomorphism between modules over $R$ in the remaining properties.
\item $C_{R,i}$ is zero for $i \neq k,n-k$, $R \oplus R$ for $i=k=\frac{n}{2}$ and $i=n$, $R$ for $i=k,n-k$ where $k \neq \frac{n}{2}$.
\item For $k=\frac{n}{2}$, the product of $(0,1,0) \in A_{R,k} \oplus C_{R,k}=A_{R,k} \oplus R \oplus R$ and $(0,0,1) \in A_{R,k} \oplus C_{R,k}=A_{R,k} \oplus R \oplus R$ is $(0,t,t) \in A_{R,n} \oplus C_{R,n}=A_{R,n} \oplus R \oplus R$ and the squares of $(0,1,0) \in A_{R,k} \oplus C_{R,k}$ and $(0,0,1) \in A_{R,k} \oplus C_{R,k}$ vanish. For $k \neq \frac{n}{2}$, the product of $(0,1) \in A_{R,k} \oplus C_{R,k}=A_{R,k} \oplus R$ and $(0,1) \in A_{R,n-k} \oplus C_{R,n-k}= A_{R,n-k} \oplus R$ is $(0,t,t) \in A_{R,n} \oplus C_{R,n}=A_{R,n} \oplus R \oplus R$. Here the square of each of these two elements $(0,1)$ vanishes.
\item For $k=\frac{n}{2}$, the product of $(0,1,0) \in A_{R,k} \oplus C_{R,k}=A_{R,k} \oplus R \oplus R$ and any element $(a,0) \in A_{R,i} \oplus C_{R,i}$ for any $i$ vanishes. For $k \neq \frac{n}{2}$, the product of $(0,1) \in A_{R,k} \oplus C_{R,k}=A_{R,k} \oplus R$ and any element $(a,0) \in A_{R,i} \oplus C_{R,i}$ for any $i$ vanishes.
\end{enumerate}
Furthermore, in addition, let $A_{R,k}$ be not zero and let $a_k \in A_{R,k}$ be a non-zero element such that for any element $r \in R$ which is not a unit, we cannot represent as $a_k=r{a_k}^{\prime}$ for an element ${a_k}^{\prime} \in A_{R,k}$, that $ra_k=0$ if and only if $r \in R$ is zero, and that the homology class whose dual is $a_k$ is represented by a standard sphere of dimension $k$ embedded in the interior of the smooth manifold represented as a regular neighborhood of bouquet of a finite number of closed and connected manifolds in the beginning. Let $t^{\prime} \in R$ be represented as ${t_0}^{\prime} \in \mathbb{Z}$ times the identity element $1 \in R$.
We can construct the map satisfying either of the following property in addition to the five properties before.
\begin{enumerate}
\item For $k=\frac{n}{2}$, the product of $(a_k,0,0) \in A_{R,k} \oplus C_{R,k}=A_{R,k} \oplus R \oplus R$ and $(0,0,1) \in A_{R,k} \oplus C_{R,k}=A_{R,k} \oplus R \oplus R$ is $(0,0,t^{\prime}) \in A_{R,n} \oplus C_{R,n}= A_{R,n} \oplus R \oplus R$.
\item For $k \neq \frac{n}{2}$, the product of $(a_k,0) \in A_{R,k} \oplus C_{R,k}=A_{R,k} \oplus R$ and $(0,1) \in A_{R,n-k} \oplus C_{R,n-k}=A_{R,n-k} \oplus R$ is $(0,0,t^{\prime}) \in A_{R,n} \oplus C_{R,n}=A_{R,n} \oplus R \oplus R$.
\end{enumerate}

\end{Thm}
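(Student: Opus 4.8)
The plan is to manufacture $A_R$ as the cohomology ring of the Reeb space of an auxiliary special generic map, to graft it onto $f$ by a connected sum producing $f_0$, and then to run the ATSS operation of Proposition \ref{prop:2} inside the grafted region; the extra products with $a_k$ will come from choosing the normal system of submanifolds so that one of its core spheres is linked with the sphere carrying the dual of $a_k$.

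\textbf{Realizing $A_R$ and forming $f_0$.} Let $B_0$ be the bouquet of the given closed connected manifolds; by hypothesis it embeds in ${\mathbb{R}}^n$, so a compact regular neighborhood $R(B_0) \subset {\mathbb{R}}^n$ is an $n$-dimensional compact manifold we can immerse into ${\mathbb{R}}^n$. By Example \ref{ex:1} (\ref{ex:1.1}) there is a stable special generic map $g:M_g \rightarrow {\mathbb{R}}^n$ from a suitable closed connected $m$-manifold whose restriction to the singular set is an embedding, whose Reeb space is diffeomorphic to $R(B_0)$, and for which the two bundles of Example \ref{ex:1} (\ref{ex:1.1}) are trivial. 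Since $R(B_0)$ deformation retracts onto $B_0$ and $H^{\ast}(B_0;R)$ is exactly the graded commutative algebra obtained canonically from the direct sum of the cohomology rings of the summand manifolds, $H^{\ast}(W_g;R) \cong A_R$. Now let $f_0$ be a connected sum of $f$ and a map obtained from $g$, as defined just above; the gluing is done where both maps are canonical sphere projections, i.e. where $\bar{f}$ and $\bar{g}$ are local diffeomorphisms onto the target, so $W_{f_0}$ is, up to homeomorphism, a connected sum of $W_f$ and $W_g$ along $n$-discs lying in the interiors of their ``manifold parts''. The elementary computation of the proof of Proposition \ref{prop:2} (equivalently a Mayer--Vietoris argument, cf. Proposition 3 of \cite{kitazawa7}) then identifies $H^{\ast}(W_{f_0};R)$ with the graded commutative algebra obtained canonically from the direct sum of $H^{\ast}(W_f;R)$ and $A_R$; in particular $A_R$ and $H^{\ast}(W_f;R)$ sit as subalgebras.

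\textbf{The ATSS operation and properties (1)--(5).} Pick an open set $U \subset N-f_0(S(f_0))$ lying in $\bar{f_0}$ of the interior of the $R(B_0)$-part of $W_{f_0}$, over which $f_0$ restricts to a trivial $S^{m-n}$-bundle; this is possible by the previous step. Apply Proposition \ref{prop:2} to $f_0$ inside $U$, with the integer parameter $t_0$, using a normal system $\{(S_1,N(S_1),c_1),(S_2,N(S_2),c_2)\}$ with $\dim S_1=k$, $\dim S_2=n-k$, with trivial normal bundles, with exactly $|t_0|$ pairs of crossings of $\{c_1 {\mid}_{S_1}, c_2 {\mid}_{S_2}\}$, and with $c_1(N(S_1)) \cup c_2(N(S_2)) \subset U$ positioned disjoint from, and unlinked with, all cycles of $W_{f_0}$ carrying $H^{\ast}(W_f;R)$ and $A_R$. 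By Proposition \ref{prop:2} the resulting $f^{\prime}$ has $H^{\ast}(W_{f^{\prime}};R)$ equal to the canonical direct sum of $H^{\ast}(W_{f_0};R)$ and a graded commutative algebra whose positive-degree part is the $C_R$ of the statement, concentrated in degrees $k,n-k,n$ with the stated ranks and with $(0,1)(0,1)=(0,t,t)$ and vanishing squares. Together with the previous step this yields properties (1)--(4); property (5) holds because the generating normal system was placed unlinked with the $A_R$-cycles, so the cup products of the new degree-$k$ class against $A_R$ --- evaluated as intersection numbers of the attached spheres against those cycles --- all vanish.

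\textbf{The final products with $a_k$, and the main obstacle.} Now let $a_k \in A_{R,k}$ be as in the hypothesis, so the homology class whose dual is $a_k$ is represented by an embedded standard $k$-sphere $\Sigma \subset {\rm Int}\, R(B_0) \subset W_{f_0}$. Redo the operation keeping the $|t_0|$ crossings of $\{c_1 {\mid}_{S_1},c_2 {\mid}_{S_2}\}$ as before, but additionally route the core sphere $c_2(S_2)$, of dimension $n-k$ (for $k=\frac{n}{2}$ one may use $c_1(S_1)$ instead), so that inside ${\rm Int}\, R(B_0)$ it meets $\Sigma$ transversally in algebraically $t_0^{\prime}$ points, while staying unlinked with the remaining $A_R$-cycles and with the $t_0$ crossings arranged so as not to interact with this intersection. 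Constructing $f^{\prime}$ from the local canonical fold maps around crossings of Section \ref{sec:2}, products of index-$1$ Morse functions with identity maps, and trivial $S^{m-n}$-bundles elsewhere, and tracing --- exactly as in the proof of Proposition \ref{prop:2}, but with the extra cycle $\Sigma$ present --- where the equatorial sphere $S_1 \times \{\ast^{\prime}\}$ and the fibre sphere $\{\ast_1\} \times S^{n-k}$ go after the hemispheres are attached, one finds that, in addition to the products already obtained, the cup product of $a_k$ with the new degree-$(n-k)$ class (resp. of $(a_k,0,0)$ with $(0,0,1)$ when $k=\frac{n}{2}$) equals $t^{\prime}$ times the second generator of $C_{R,n}$, i.e. $(0,0,t^{\prime})$ in $A_{R,n} \oplus C_{R,n}=A_{R,n} \oplus R \oplus R$. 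The main obstacle is exactly this last step: one must choose the normal system so that the $|t_0|$ crossings among $c_1(S_1)$ and $c_2(S_2)$ and the $t_0^{\prime}$-fold intersection of $c_2(S_2)$ with $\Sigma$ act on the ring structure \emph{independently} --- the former producing precisely the $(0,t,t)$-products with vanishing squares, the latter precisely the single product $(0,0,t^{\prime})$ --- without perturbing $H^{\ast}(W_f;R)$ or the rest of $A_R$, and keeping property (5) for every pairing of a new degree-$k$ class with an $A_R$-class except the designed one. This reduces to controlling, after the hemisphere attachments, exactly which homology classes of $W_{f^{\prime}}$ are represented by the equatorial and fibre spheres of the attached pieces $S_1 \times S^{n-k}$ and $S_2 \times S^k$ --- the same bookkeeping as in the proof of Proposition \ref{prop:2}, now carried out with the additional cycle $\Sigma$.
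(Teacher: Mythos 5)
Your first two steps coincide with the paper's argument: realize $A_R$ as $H^{\ast}(W_g;R)$ for a special generic map $g$ whose Reeb space is a regular neighborhood $R(B_0)$ of the bouquet (with trivial bundles), form the connected sum $f_0$, and run the ATSS operation of Proposition \ref{prop:2} inside the grafted region to get properties (1)--(5). The genuine gap is in your last step, where you obtain the product $(a_k,0)\cdot(0,1)=(0,0,t^{\prime})$ by keeping both core spheres null-homologous and instead routing the $(n-k)$-sphere $c_2(S_2)$ to meet the $k$-sphere $\Sigma$ (representing the class dual to $a_k$) in algebraically $t_0^{\prime}$ points. This mechanism fails for two reasons. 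First, it is in general unrealizable: $c_2(S_2)$ is a closed $(n-k)$-cycle lying in ${\rm Int}\, R(B_0)$, and its algebraic intersection number with the closed cycle $\Sigma$ depends only on $[c_2(S_2)]\in H_{n-k}(R(B_0);\mathbb{Z})\cong H_{n-k}(B_0;\mathbb{Z})$; whenever this group vanishes (e.g.\ $B_0$ a bouquet of $k$-spheres with $2k<n$, which is exactly the interesting case where $A_{R,n-k}=0$ but $A_{R,k}\neq 0$, cf.\ Theorem \ref{thm:2}), the intersection number is forced to be $0$ and $t_0^{\prime}$ cannot be prescribed. Second, even where such an intersection is realizable it does not enter the cup product in question: the new degree-$(n-k)$ class, call it $\beta_{n-k}$, is the dual of the fiber sphere $\{\ast\}\times S^{n-k}\subset S_1\times S^{n-k}$, and the pairing of $a_k\cup\beta_{n-k}$ with the new $n$-dimensional class carried by $S_1\times S^{n-k}$ reduces, by the usual K\"unneth bookkeeping, to $\langle a_k,[c_1(S_1)]\rangle$ --- which you have made zero by placing $c_1(S_1)$ unlinked from everything. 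The position of $c_2(S_2)$ relative to $\Sigma$ does not appear in this evaluation, and on the other new $n$-class (from $S_2\times S^k$) the factor $\langle a_k, [\{\ast\}\times S^k]\rangle$ vanishes by the choice of splitting.

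The missing idea, which is the paper's mechanism, is to choose the $k$-dimensional core sphere $c_1(S_1)$ of the normal system so that $[c_1(S_1)]=t_0^{\prime}[\Sigma]$ in $H_k(W_{f_0};R)$ (a $k$-sphere representing $t_0^{\prime}$ times the class dual to $a_k$, available in the regular value set because the special generic summand was built with trivial bundles over $\Sigma$), while independently arranging $|t_0|$ pairs of crossings between $c_1(S_1)$ and $c_2(S_2)$ to control the product of the two new classes. Then $\langle a_k,[c_1(S_1)]\rangle=t_0^{\prime}$ directly yields the asserted $(0,0,t^{\prime})$. Your final bookkeeping --- tracking where the equatorial and fibre spheres land after the hemisphere attachments --- is the right thing to carry out, but for this configuration rather than the one you describe.
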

\begin{proof}
We have an $n$-dimensional compact manifold embedded into ${\mathbb{R}}^n$ whose cohomology ring is isomorphic to $A_R$ by the assumption. Example \ref{ex:1} (\ref{ex:1.1}) yields a special generic map into ${\mathbb{R}}^n$ whose Reeb space is diffeomorphic to the $n$-dimensional manifold. We can obtain the special generic map such that the restriction to the singular set is an embedding and that the two kinds of the presented bundles in Example \ref{ex:1} (\ref{ex:1.1}) are trivial.  

We consider a connected sum of $f$ and this special generic map. The resulting map is $f_0$. $H^{\ast}(W_{f_0};R)$ is a graded commutative algebra obtained canonically from the direct sum of $H^{\ast}(W_{f};R)$ and a graded algebra $A_R$ over $R$:

The key ingredient is the proof of Proposition \ref{prop:2}. This yields the first five properties. We explain the proof of the remaining property. $a_k$ is, by the definition and construction of special generic map $f_0$, regarded as the dual of a homology class represented by a standard $k$-dimensional sphere ${S^k}_0$ embedded in the regular value set of $f_0$ and $f_0 {\mid}_{{f_0}^{-1}({S^k}_0)}:{f_0}^{-1}({S^k}_0) \rightarrow {S^k}_0$ gives a trivial $S^{m-n}$-bundle. We can represent $l_0$ times the class $a_k$ by a standard $k$-dimensional sphere. In the proof of Proposition \ref{prop:2}, we take $\{(S_1,N(S_1),c_1:N(S_1) \rightarrow P), (S_2,N(S_2),c_2:N(S_2) \rightarrow P)\}$ so that $c_1(S_1)$ is the $k$-dimensional standard sphere. We can see that we can obtain the map satisfying the last two properties on products of the cohomology ring by observing the topologies of the resulting Reeb spaces: for more precise discussions, see the proofs of several propositions and theorems in \cite{kitazawa6}.
\end{proof}

\begin{Ex}
\label{ex:2}
The class of {\it CPS} manifolds are characterized as the minimal class of manifolds satisfying the following conditions {\rm (}see also \cite{kitazawa6}, in which the author introduced the class of these manifolds first{\rm )}.
\begin{enumerate}
\item A standard sphere whose dimension is positive is CPS.
\item A product of two CPS manifolds is CPS.
\item A manifold represented as a connected sum of two CPS manifolds is CPS.
\end{enumerate}
One of good properties of CPS manifolds is that we can embed these manifolds into Euclidean spaces as codimension $1$ closed submanifolds.
This is introduced as a reasonable class for obtaining special generic maps as in the beginning of the proof of Theorem \ref{thm:1} and Example \ref{ex:1} (\ref{ex:1.1}). This class also contributes to producing various explicit situations explaining Theorem \ref{thm:1} and several theorems previously obtained by the author well.
 
In \cite{kitazawa6}, in situations similar to that of Theorem \ref{thm:1}, ATSS operations are studied under the condition that graded commutative algebras playing roles as $A_R$ plays are isomorphic to some integral cohomology rings of bouquets of finite numbers of CPS manifolds and additional conditions that the immersions of the standard spheres are embeddings and that the images are disjoint: the numbers of embedded standard spheres are general.
Let $f$ be a canonical projection of a unit sphere of dimension $m>5$ into ${\mathbb{R}}^5$ and $A_R$ be an algebra over $R:= \mathbb{Z}$ isomorphic to the integral cohomology ring of $S^2 \times S^2$. Consider a case where $(S_1,S_2)=(S^2,S^3)$ and $(l_0,t_{0})$ is a general pair of integers. The fundamental group of the resulting Reeb space vanishes. In general, an integral cohomology ring isomorphic to that of this resulting Reeb space cannot be obtained as the integral cohomology ring of another resulting Reeb space if we consider cases such that the immersions of the standard spheres are embeddings and that the images are disjoint only in Theorem \ref{thm:1}. We see about this.

First consider a case where the bouquet of CPS manifolds is represented as $S^2 \times S^2$ satisfying the conditions that the immersions of standard spheres are embeddings, that the number of the immersions is two, that the two immersed spheres are $S^2$ and $S^3$, respectively, and that the images are disjoint. For the resulting Reeb space in this case, the 2nd integral cohomology group is isomorphic to $R \oplus R \oplus R=\mathbb{Z} \oplus \mathbb{Z} \oplus \mathbb{Z}$. The rank of the submodule consisting of all 2nd cohomology classes such that the products with 2nd cohomology classes are always zero is exactly $1$. For such a class, for any 3rd
 integral cohomology class, the product must vanish. Note that the integral homology group of the resulting Reeb space $W_{f^{\prime}}$ in this case is isomorphic to that of the Reeb space before, and that the fundamental group of the space vanishes. The rank of the image of the homomorphism defined by taking the product of an element of $H^2(W_{f^{\prime}};\mathbb{Q})$ and an element of $H^3(W_{f^{\prime}};\mathbb{Q})$ is at most $1$. In general, in the situation of Theorem \ref{thm:1} as before, the rank may be $2$.

Let the target space be ${\mathbb{R}}^5$. To obtain Reeb spaces whose integral homology groups are isomorphic to the integral homology groups of the Reeb spaces before, and whose fundamental groups vanish, such that the integral cohomology ring of $S^2 \times S^2$ is regarded as subalgebras of the integral cohomology rings of the Reeb spaces, $A_R$ must be a graded commutative algebra over $R:= \mathbb{Z}$ isomorphic to the integral cohomology ring of a bouquet of $S^2 \times S^2$ and $S^3$ or that of $S^2 \times S^2$ and $S^2$: we do not consider the previous case. In the former case, for the family of the embeddings of standard spheres, the number of the embeddings are two and the spheres are a point and a standard sphere of dimension $3$, respectively. In the latter case, for the family of embeddings of standard spheres, the number of the embeddings are two and the spheres are a point and a standard sphere of dimension $2$, respectively. For each case, the rank of the image of the homomorphism defined by taking the product of an element of $H^2(W_{f^{\prime}};\mathbb{Q})$ and an element of $H^3(W_{f^{\prime}};\mathbb{Q})$ is at most $1$. In general, in the situation of Theorem \ref{thm:1} as before, the rank may be $2$.
\end{Ex}
We have the following general facts respecting some of Example \ref{ex:2}.
\begin{Thm}
\label{thm:2}
In the situation of Theorem \ref{thm:1}, let $R:=\mathbb{Z}$ and let $2k<n$.
Assume that $A_{R,n-k}$ is zero. Assume also that the immersions of $S^k$ and $S^{n-k}$, respectively, are embeddings and that the images are disjoint. In this situation, consider about the graded commutative algebra $B_R$. The rank of the image of the homomorphism defined by taking the product of an element of $B_{R,k} \otimes \mathbb{Q}$ and an element of $B_{R,n-k} \otimes \mathbb{Q}$ is at most $1$.
\end{Thm}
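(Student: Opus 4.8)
The plan is to use the hypothesis $A_{R,n-k}=0$ to collapse the middle module to a line and then to localize cup products with its generator near one of the two new bubbles. First I would read off, from Theorem~\ref{thm:1} together with $2k<n$ (so $0<k<n-k$ and $k\neq\frac{n}{2}$) and from the vanishing of $A_R$ in degree $n$ (it is the cohomology ring of a bouquet of closed connected manifolds of dimension $<n$): $C_{R,k}=C_{R,n-k}=R$ and $A_{R,n}=0$, whence after tensoring with $\mathbb{Q}$ one has $B_{R,k}\otimes\mathbb{Q}=(A_{R,k}\otimes\mathbb{Q})\oplus\mathbb{Q}g_k$, $B_{R,n-k}\otimes\mathbb{Q}=\mathbb{Q}g_{n-k}$, and $B_{R,n}\otimes\mathbb{Q}=\mathbb{Q}u\oplus\mathbb{Q}v$, where $g_k$ is the new degree-$k$ class created by the bubbling along the embedded $S^{n-k}$, $g_{n-k}$ the new degree-$(n-k)$ class created by the bubbling along the embedded $S^k$, and $u,v$ the two new degree-$n$ classes coming from the deformed products $S^k\times S^{n-k}$ and $S^{n-k}\times S^k$ that appear in the proof of Proposition~\ref{prop:2}. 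Since $B_{R,n-k}\otimes\mathbb{Q}$ is one-dimensional, the image of the product homomorphism is exactly the span of $\{x\cup g_{n-k}:x\in B_{R,k}\otimes\mathbb{Q}\}$, so it suffices to show that $L\colon x\mapsto x\cup g_{n-k}$ has rank at most one.

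The two geometric inputs are as follows. First, $g_k\cup g_{n-k}=0$: since $c_1(S^k)$ and $c_2(S^{n-k})$ are embedded with disjoint images, the generating family of immersions has no crossings, so the integer $t_0$ of Proposition~\ref{prop:2} is $0$, and the case $t_0=0$ of the cup-product computation in the proof of Proposition~\ref{prop:2} gives $g_k\cup g_{n-k}=(0,t,t)=0$; equivalently, the two bubblings are carried out over disjoint parts of $U$, so $g_k$ and $g_{n-k}$ have disjoint supports. Hence $L$ annihilates $\mathbb{Q}g_k$, and the image of $L$ is the span of $\{a\cup g_{n-k}:a\in A_{R,k}\otimes\mathbb{Q}\}$. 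Second, this last span lies in a single line: $g_{n-k}$ can be represented with support in an arbitrarily small neighbourhood $\mathcal N$ of the $S^k$-bubble (pairing $g_{n-k}$ against the homology basis of $W_{f'}$ shows it restricts to zero on $W_{f_0}$ and on the $S^{n-k}$-bubble piece), so $a\cup g_{n-k}$ depends only on the restriction of $a$ to $\mathcal N$; and since the bubbling along $c_1(S^k)$ modifies only a tubular neighbourhood of $c_1(S^k)$ inside the region $U$ on which $f_0$ is a trivial $S^{m-n}$-bundle, with trivial normal bundle, that restriction factors through $H^k(S^k\times D^{n-k};\mathbb{Q})\cong H^k(S^k;\mathbb{Q})\cong\mathbb{Q}$, a one-dimensional space (here $0<k<n-k$ is used). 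Thus $a\mapsto a\cup g_{n-k}$ factors through a one-dimensional vector space, its image has rank at most one, and combined with the vanishing on $\mathbb{Q}g_k$ this puts the image of $L$ in a line, which is the assertion.

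The step I expect to be the main obstacle is making the second input rigorous: verifying, via the local canonical fold maps around crossings of Section~\ref{sec:2} and the explicit description of $W_{f'}$ in the proof of Proposition~\ref{prop:2}, that $g_{n-k}$ really is supported near the $S^k$-bubble and that the restriction of ``old'' degree-$k$ classes to the modified neighbourhood factors through $H^k(S^k;\mathbb{Q})$ — the modified neighbourhood is only a polyhedron, so the attached piece must be handled carefully, and the case $n=2k+1$, where the collar $S^k\times S^{n-k-1}$ carries extra degree-$k$ cohomology, needs the observation that the meridian sphere is null-homotopic in the disc factor. An alternative route is to argue entirely inside the presentation of $B_R$ from Theorem~\ref{thm:1}: there $g_k\cup g_{n-k}=(0,t,t)$ with $t=0$, and one is left to check that every element of $A_{R,k}\otimes\mathbb{Q}$ pairs with $g_{n-k}$ along the same line as the distinguished class $a_k$ of the last part of Theorem~\ref{thm:1}, i.e.\ within one fixed summand of $C_{R,n}\otimes\mathbb{Q}=\mathbb{Q}u\oplus\mathbb{Q}v$ — which is exactly what the localization supplies, and which also explains why the general situation of Theorem~\ref{thm:1}, where $t$ may be nonzero, allows rank $2$.
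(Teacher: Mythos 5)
Your proposal is correct and its central point coincides exactly with the paper's stated ``main ingredient'': since $c_1(S^k)$ and $c_2(S^{n-k})$ are embeddings with disjoint images, the generating family has no crossings, so $t_0=0$ and hence $t=0$, killing the product $(0,t,t)$ of the two new generators. The remainder of your argument --- that $B_{R,n-k}\otimes\mathbb{Q}$ is one-dimensional and that the products $a\cup g_{n-k}$ for $a\in A_{R,k}\otimes\mathbb{Q}$ lie in the single summand already exhibited as $(0,0,t^{\prime})$ in the last part of Theorem \ref{thm:1} --- is a careful fleshing-out of what the paper leaves implicit in the product structure of $B_R$, and is consistent with it.
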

\begin{proof}[The main ingredient of a proof]
In the situation of theorem \ref{thm:1}, $t$ must be $0$.
\end{proof}
\begin{Thm}
\label{thm:3}
In the situation of Theorem \ref{thm:1}, let $2k<n$.
Assume that $A_{R,n-k}$ is trivial and the submodule consisting of all elements of $A_k$ such that for any element of $A_k$ the product vanishes is also trivial. Assume also that the immersions of $S^k$ and $S^{n-k}$, respectively, are embeddings and that the images are disjoint. In this situation, consider about the graded commutative algebra $B_R$ and let ${B_{R,k}}^{\prime} \subset B_{R,k}$ be the submodule consisting of all elements such that for any element of $B_R$ the product vanishes. The rank of the image of the homomorphism defined by taking the product of an element of ${B_{R,k}}^{\prime}$ and an element of $B_{R,n-k}$ must be $0$. If we remove the conditions that the immersions of $S^k$ and $S^{n-k}$, respectively, are embeddings and that the images are disjoint, then the rank of the image of the homomorphism defined by taking the product of an element of ${B_{R,k}}^{\prime}$ and an element of $B_{R,n-k}$ is at most $1$.
\end{Thm}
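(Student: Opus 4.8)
The plan is to follow the scheme of the proofs of Proposition~\ref{prop:2} and Theorem~\ref{thm:1} and to work entirely inside the graded commutative algebra $B_R$, whose module and ring structure are already recorded there. First note that, by Definition~\ref{def:6}, products between the $H^{\ast}(W_{f};R)$-summand and $B_R$ vanish in $H^{\ast}(W_{f^{\prime}};R)$, so the whole computation is internal to $B_R$. Write $\beta_k$ and $\beta_{n-k}$ for the distinguished generators of the free rank-one summands $C_{R,k}$ and $C_{R,n-k}$, i.e.\ the classes denoted $(0,1)$ in the fourth and fifth listed properties of Theorem~\ref{thm:1}; geometrically these are the cohomology duals of the homology classes of $\{\ast\}\times S^{k}$ and $\{\ast\}\times S^{n-k}$ in the two products of spheres glued in by the ATSS operation, exactly as in the proof of Proposition~\ref{prop:2}. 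Since $2k<n$ we are in the branch $k\neq\frac{n}{2}$, so $C_{R,k}=R\beta_k$ and $C_{R,n-k}=R\beta_{n-k}$, and the hypothesis $A_{R,n-k}=0$ gives $B_{R,n-k}=C_{R,n-k}=R\beta_{n-k}$, a free module of rank one. The heart of the argument is the identity ${B_{R,k}}^{\prime}=R\beta_k$.

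To prove this, take $x=a+c\beta_k$ with $a\in A_{R,k}$ and $c\in R$, and an arbitrary $y=a^{\prime}+c^{\prime}\beta_k$ in $B_{R,k}=A_{R,k}\oplus R\beta_k$. By the fifth property of Theorem~\ref{thm:1} we have $\beta_k a^{\prime}=0$ and $\beta_k a=0$, and by the fourth property $\beta_k^{2}=0$; hence $xy=aa^{\prime}$, an element of $A_{R,2k}\subseteq B_{R,2k}$. Thus $x\in{B_{R,k}}^{\prime}$ forces $a$ to annihilate every element of $A_{R,k}$, so $a=0$ by the hypothesis that this annihilator submodule of $A_{R,k}$ is trivial; conversely $\beta_k$ itself annihilates $B_{R,k}$, so ${B_{R,k}}^{\prime}=R\beta_k$. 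The submodule of $B_{R,n}=A_{R,n}\oplus R\oplus R$ generated by all products $x\cup y$ with $x\in{B_{R,k}}^{\prime}$ and $y\in B_{R,n-k}$ is then the cyclic module $R(\beta_k\cup\beta_{n-k})$, whose rank is at most $1$; this is the second assertion. For the first assertion I would add the geometric input: if the immersions $c_1$ of $S^{k}$ and $c_2$ of $S^{n-k}$ are embeddings with disjoint images then, both $c_j$ being injective and the images disjoint, the family $\{c_1\mid_{S^{k}},c_2\mid_{S^{n-k}}\}$ has no crossing, so the integer $t_0$ — which by the proof of Proposition~\ref{prop:2} counts the pairs of crossings of this family — is zero; hence $t=0$, and the fourth property of Theorem~\ref{thm:1} gives $\beta_k\cup\beta_{n-k}=(0,t,t)=0$, so the submodule vanishes and its rank is $0$.

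Finally I would dispose of the degenerate configurations glossed over above. If $2k=n-k$ (that is, $n=3k$), then $A_{R,2k}=A_{R,n-k}=0$ by hypothesis, so in fact every product of two degree-$k$ classes of $B_R$ already vanishes; the triviality of the annihilator submodule of $A_{R,k}$ then forces $A_{R,k}=0$, and one still has ${B_{R,k}}^{\prime}=B_{R,k}=R\beta_k$, so the computation applies verbatim. One should also observe that when the additional (``furthermore'') data $a_k$, $t^{\prime}$ of Theorem~\ref{thm:1} is present, the element $a_k$ does not lie in ${B_{R,k}}^{\prime}$, since it is a nonzero element of $A_{R,k}$ and that module's annihilator submodule is trivial; hence $t^{\prime}$ never enters the computation of the image on ${B_{R,k}}^{\prime}$. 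The only step that is not purely formal is the identification ${B_{R,k}}^{\prime}=R\beta_k$, which is precisely where the hypothesis on $A_{R,k}$ and the fifth property of Theorem~\ref{thm:1} are used; the reduction of the first assertion to ``$t=0$'' is the crossing-count observation already appearing in the proof of Theorem~\ref{thm:2}, and I expect that to be the main (though modest) geometric point to spell out carefully.
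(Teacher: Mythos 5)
Your proof is correct and follows the paper's own (very terse) argument: the entire content of the paper's stated proof is that embeddings with disjoint images force $t_0=0$, hence $t=0$ and $\beta_k\cup\beta_{n-k}=(0,t,t)=0$, while in general the image is the cyclic module generated by $(0,t,t)$, of rank at most one. Your algebraic elaboration --- in particular the identification ${B_{R,k}}^{\prime}=R\beta_k$, which implicitly reads ${B_{R,k}}^{\prime}$ as the annihilator of $B_{R,k}$ (the reading consistent with Example \ref{ex:2}; the literal ``annihilates all of $B_R$'' would make the first assertion vacuous) --- is a sound filling-in of what the paper leaves implicit.
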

\begin{proof}[The main ingredient of a proof]
For the former statement, in the situation of theorem \ref{thm:1}, $t$ must be $0$. The latter statement immediately follows from the situation.
\end{proof}
Last we introduce a proposition stating that the Reeb space of a stable fold map satisfying several properties on indices of singular points and preimages know much about homology groups and the cohomology ring of the manifold of the domain.
\begin{Prop}[\cite{saekisuzuoka}, \cite{kitazawa2}, \cite{kitazawa3}, \cite{kitazawa7} and so on.]
\label{prop:3}
For a stable fold map $f:M \rightarrow N$ from an $m$-dimensional closed, connected and orientable manifold $M$ into an $n$-dimensional manifold $N$ with no boundary such that the following properties hold.
\begin{enumerate}
\item $m-n>1$.
\item Preimages of regular values
 are always disjoint unions of standard spheres.
\item Indices of singular points are always $0$ or $1$.
\item For all the crossings of the restriction of $f$ to the singular set $S(f)$, which is an immersion, the preimages consist of exactly two points.
\end{enumerate}
In this situation, we have the following two properties for a commutative group $A$.
\begin{enumerate}
 \item Three induced homomorphisms ${q_f}_{\ast}:{\pi}_j(M) \rightarrow {\pi}_j(W_f)$, ${q_f}_{\ast}:H_j(M;A) \rightarrow H_j(W_f;A)$ and ${q_f}^{\ast}:H^j(W_f;A) \rightarrow H^j(M;A)$ are isomorphisms for $0 \leq j \leq m-n-1$. 
\item Suppose that $A$ is a commutative ring. Let $J$ be the set of all integers greater than or equal to $0$ and smaller than or equal to $m-n-1$ and if ${\oplus}_{j \in J} H^{j}(W_f;A)$ and ${\oplus}_{j \in J} H^{j}(M;A)$ are algebras such that the sums and the products are canonically induced from the cohomology rings $H^{\ast}(W_f;A)$ and $H^{\ast}(M;A)$ respectively and that the maximal degrees are $m-n-1$ (the product is zero if it is of degree larger than $m-n-1$), then $q_f$ induces an isomorphism between the algebras
${\oplus}_{j \in J} H^j(W_f;A)$ and ${\oplus}_{j \in J} H^{j}(M;A)$ and the isomorphism is given by the restriction of ${q_f}^{\ast}$ to ${\oplus}_{j \in J} H^j(W_f;A)$.
\item Suppose that $A$ is a principal ideal domain and that $m=2n$ holds. Under these assumptions, the rank of $H_n(M;A)$ is twice the rank of $H_n(W_f;A)$ and in addition if $H_{n-1}(W_f;A)$ is free, then the $H_{n-1}(M;R)$, $H_n(M;A)$ and $H_n(W_f;A)$ are also free modules over $A$.
\end{enumerate} 
\end{Prop}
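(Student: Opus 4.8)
The plan is to reduce everything to the structure of the Reeb quotient $q_f\colon M\to W_f$ over the natural stratification of $W_f$. Stratify $W_f$ by the images of the regular set, of the index-$0$ singular points (which form $\partial W_f$), of the index-$1$ singular points, and of the finitely many crossings. Over the open regular stratum, hypothesis (2) and the very definition of $W_f$ show that $q_f$ is a smooth $S^{m-n}$-bundle. Over a collar of $\partial W_f$ the local normal form of a definite fold shows that $q_f$ is the radial projection of a (locally trivial) $D^{m-n+1}$-bundle, so each fiber there is a single point and no homology is created. Near an index-$1$ point the transverse Morse function has exactly one critical point, of index $1$, so crossing the corresponding singular value performs a $1$-surgery on the regular fiber; since the regular fibers on both sides are disjoint unions of standard spheres and $m-n>1$, this surgery cannot be a self-surgery on one sphere (that would yield an $S^{m-n-1}\times S^1$, forbidden by (2)), hence it merges two distinct spheres into one and the singular fiber is $S^{m-n}\vee S^{m-n}$. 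Hypothesis (4) guarantees that over a crossing at most two such degenerations occur in a single fiber. The conclusion of this step is: every fiber $q_f^{-1}(w)$ is connected and $(m-n-1)$-connected — its reduced homology and homotopy vanish in degrees $1,\dots,m-n-1$, and $H_{m-n}$ of it is free.

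For property (1), note $q_f$ is proper since $M$ is compact, so the Leray sheaf $\mathcal{R}^q(q_f)_{\ast}A$ has stalk $H^q(q_f^{-1}(w);A)$, which equals $A$ (the constant sheaf) for $q=0$ because fibers are connected, and vanishes for $1\le q\le m-n-1$ by the previous step. In the Leray spectral sequence $E_2^{p,q}=H^p(W_f;\mathcal{R}^q(q_f)_{\ast}A)\Rightarrow H^{p+q}(M;A)$ only the row $q=0$ contributes in total degree $\le m-n-1$ and there is no room for differentials affecting those degrees, so $q_f^{\ast}\colon H^j(W_f;A)\xrightarrow{\sim}H^j(M;A)$ for $j\le m-n-1$; the homology statement follows dually, and the homotopy statement from the fact that, in degrees $\le m-n$, $q_f$ behaves like a fibration with $(m-n-1)$-connected fiber (since $m-n\ge 2$ the fibers are simply connected, and the sphere-collapsing local models make $q_f$ a quasifibration through this range), so the homotopy exact sequence gives $\pi_j(M)\cong\pi_j(W_f)$ for $j\le m-n-1$. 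Property (2) is then formal: $q_f^{\ast}$ is always a graded-ring homomorphism, it is an isomorphism of the underlying modules in degrees $0,\dots,m-n-1$ by (1), and truncating the products above degree $m-n-1$ on both sides turns it into an isomorphism of the truncated algebras.

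For property (3), with $m=2n$ we have $m-n-1=n-1$, so (1) gives $H_{n-1}(M;A)\cong H_{n-1}(W_f;A)$; freeness of one then gives freeness of the other. Poincaré duality on the closed orientable $2n$-manifold $M$ together with the universal coefficient theorem yields $H_n(M;A)\cong H^n(M;A)\cong\operatorname{Hom}(H_n(M;A),A)\oplus\operatorname{Ext}(H_{n-1}(M;A),A)$, and if $H_{n-1}(W_f;A)$ (hence $H_{n-1}(M;A)$) is free the $\operatorname{Ext}$ term vanishes and $H_n(M;A)$ is free. For the rank count one uses that over the regular part $q_f$ is an $S^n$-bundle and that passing the index-$0$ and index-$1$ strata only adds cells of dimension $n$ and $n+1$: comparing the Wang/Gysin-type sequence of this bundle with the contributions of the singular strata shows $H_n(M;A)$ sits in an extension of a module of rank $r$ by a module of rank $r$, where $r=\operatorname{rank}H_n(W_f;A)$, so $\operatorname{rank}H_n(M;A)=2r$; finally the surjection $q_{f\ast}\colon H_n(M;A)\twoheadrightarrow H_n(W_f;A)$ coming from $n$-connectivity splits off $H_n(W_f;A)$ as a summand of the free module $H_n(M;A)$, making it free. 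This is the content of the cited results of Saeki--Suzuoka and the refinements in \cite{kitazawa2}, \cite{kitazawa3}, \cite{kitazawa7}, to which I would defer for the bookkeeping.

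The main obstacle is the first step: rigorously checking that hypothesis (2) forces every index-$1$ passage to be the "merge two spheres" surgery and that hypothesis (4) controls the crossings, so that all $q_f$-fibers really are $(m-n-1)$-connected; and, in the second step, the homotopy-group assertion, which requires $q_f$ to be a quasifibration through degree $m-n$. These quasifibration-type statements for Reeb maps of fold maps are the genuine technical heart, and are precisely where one leans on \cite{saekisuzuoka} and the author's earlier papers.
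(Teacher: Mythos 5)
Your route is genuinely different from the paper's. The paper does not argue fiberwise at all: its sketch builds, from the three local models of the preimages, an $(m+1)$-dimensional compact PL manifold $W$ that collapses to $W_f$ and is obtained from $M\times\{0\}\subset M\times[0,1]$ by attaching handles of index $\geq m-n$; all of property (1), the ring statement (2), and (via Poincar\'e--Lefschetz duality on $W$) property (3) are then read off from the $(m-n-1)$-connectivity of the pair $(W,M)$. Your fiberwise analysis plus the Leray spectral sequence does correctly recover the cohomology isomorphisms of (1) (the stalk computation is justified because preimages of small neighborhoods deformation retract to the $q_f$-fibers), and your observation that hypothesis (2) rules out the self-surgery (which would create an $S^1\times S^{m-n-1}$ component) is exactly the right use of the sphere-fiber hypothesis.

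However, two steps are genuinely broken. First, $q_f$ is \emph{not} a quasifibration, not even ``through a range'' in the sense you need: over a definite fold value the fiber is a point while over an adjacent regular value it is $S^{m-n}$, and a quasifibration over a connected base has all fibers weakly homotopy equivalent; so the $\pi_j$-isomorphisms in (1) do not follow from a long exact sequence of a (quasi)fibration, and the sentence asserting this is where your argument fails. The handle-attachment cobordism $W$ is precisely the device that replaces it. Second, in property (3) you conclude that $H_n(W_f;A)$ is free because it is a quotient of the free module $H_n(M;A)$ via a surjection that ``splits off'' a summand; a quotient of a free module over a PID need not be free ($\mathbb{Z}\twoheadrightarrow\mathbb{Z}/2$), and the splitting you invoke presupposes exactly the projectivity you are trying to establish, so this step is circular. (A correct elementary fix: $W_f$ is an $n$-dimensional polyhedron by Proposition \ref{prop:1}, so $H_n(W_f;A)$ is a submodule of the free module of $n$-chains, hence free over a PID; alternatively one uses duality on $W$.) The rank-doubling claim is likewise only asserted, via an unspecified ``Wang/Gysin-type'' comparison, where the cited arguments actually use the exact sequence of the pair $(W,M)$ together with Poincar\'e--Lefschetz duality on the $(m+1)$-manifold $W$.
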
 

Hereafter, the description is essentially same as that of the last part of \cite{kitazawa7}.

The {\it piecewise smooth category} is the category such that objects are smooth manifolds having canonically defined PL structures and that morphisms are piecewise smooth maps between these manifolds with these PL structures. This category is known to be equivalent to the PL category.

We give a sketch of the proof of this. We can give more rigorous proofs this referring to the discussions in the first three referred articles and \cite{saeki3}.
\begin{proof}[A sketch of the proof]

For each point in the image and an $n$-dimensional small standard closed disc containing the point as its neighborhood, each connected component of the preimage is either of the following types as smooth manifolds which may have corners,
\begin{itemize}
\item A product of an ($m-n$)-dimensional standard sphere and an $n$-dimensional standard closed disc. 
\item A product of an manifold obtained by removing the interior of the disjoint union of three disjointly and smoothly embedded standard closed discs in $S^{m-n+1}$ and an ($n-1$)-dimensional standard closed disc.   
\item A product of an manifold obtained by removing the interior of the disjoint union of four disjointly and smoothly embedded standard closed discs in $S^{m-n+1}$, a closed interval $I$ and an ($n-2$)-dimensional standard closed disc. 
\end{itemize}

There exist three types of the topologies of small regular neighborhoods of points in the Reeb space. Each type of the topologies corresponds to each case above. All points in the Reeb space of these three types form manifolds whose dimensions are $n$, $n-1$, and $n-2$, respectively.

For each case, we can construct bundles whose fibers are as above in the piecewise linear category and we can construct bundles whose fibers are $D^{m-n+1}$, $D^{m-n+2}$, or $D^{m-n+2} \times I$ in the category whose subbundles obtained by restricting the fibers to suitable compact submanifolds of the boundaries of the discs $D^{m-n+1}$ or $D^{m-n+2}$ are the original bundles. Note that the dimensions of the suitable compact submanifolds are same as those of the boundaries and that in the last case first we restrict $D^{m-n+2} \times I$ to $\partial D^{m-n+2} \times I =S^{m-n+1} \times I$. We can locally construct these bundles and glue them in the piecewise smooth category or PL category.
As a result, we have a desired ($m+1$)-dimensional compact PL manifold collapsing to $W_f$. $W_f$ is an $n$-dimensional polyhedron. In the PL category, the resulting ($m+1$)-dimensional manifold is a PL manifold obtained by attaching handles whose indices are larger than or equal to $m-n$ to $M \times \{0\} \subset M \times [0,1]$. This essentially completes the proof.

\end{proof}
We can apply this to explicit fold maps obtained in Theorem \ref{thm:1} (Proposition \ref{prop:2})
% and Theorem \ref{thm:2}
 in suitable situations.

\section{Acknowledgement.}
\thanks{The author has been supported by JSPS KAKENHI Grant Number JP17H06128 "Innovative research of geometric topology and singularities of differentiable mappings"
 as a member of the project. The present work is also supproted by the project.}

\end{document}